\theoremstyle{plain}
\newtheorem{thm}{Theorem}[section]
\newtheorem{prop}[thm]{Proposition}
\newtheorem{lem}[thm]{Lemma}
\newtheorem{defi}[thm]{Definition}
\newtheorem{ex}[thm]{Example}
\begin{document}

\title{Connes integration formula for the noncommutative plane}

\author[]{F. Sukochev}
\address{School of Mathematics and Statistics, University of New South Wales, Kensington,  2052, Australia}
\email{f.sukochev@unsw.edu.au}

\author[]{D. Zanin}
\address{School of Mathematics and Statistics, University of New South Wales, Kensington,  2052, Australia}
\email{d.zanin@unsw.edu.au}

\begin{abstract} Our aim is to prove the integration formula on the noncommutative (Moyal) plane in terms of singular traces {\it a la} Connes.
\end{abstract}

\maketitle

\section{Introduction}

Let $M$ be a compact Riemannian manifold. The following formula can be found in p.~34 in \cite{BF} and in Corollary 7.21 in \cite{GVF}.
\begin{equation}\label{original}
{\rm Tr}_{\omega}(M_f(1-\Delta)^{-\frac{d}{2}})=\int_M f d{\rm vol},\quad f\in C^{\infty}(M).
\end{equation}
Here, $M_f$ is the multiplication operator, $\Delta$ is the Hodge-Laplacian operator on $L_2(M,{\rm vol})$ and ${\rm Tr}_{\omega}$ is the Dixmier trace on the ideal $\mathcal{L}_{1,\infty}$ (see Section \ref{prelims}). Also, Corollary 7.22 in \cite{GVF} wrongly extends this result to $f \in L_1(M,{\rm vol})$ (in fact, $f\in L_2(M,{\rm vol})$ is the necessary and sufficient condition for this formula to hold; see \cite{LPS} or the book \cite{LSZ} for detailed proofs).

According to \cite{BF}, formula \eqref{original} \lq\lq led Connes to introduce the Dixmier trace as the correct operator theoretical substitute for integration of infinitesimals of order one in non-commutative geometry.\rq\rq~It appears suitable to refer to \eqref{original} and similar results as the \lq\lq Connes Integration Formula\rq\rq.

Compactness of the (resolvent of the) Hodge-Dirac operator plays a crucial role in the proofs of Connes Integration Formula for unital spectral triples (see \cite{BF} and \cite{GVF}). For non-unital spectral triples (including non-compact manifolds), the proofs become radically harder. Even the case of the simplest non-compact manifold $\mathbb{R}^d$ required a substantial effort and the first reasonable answer was very recently given in \cite{KLPS} (see the book \cite{LSZ} for detailed proofs).

In this paper, we investigate the validity of Connes Integration Formula for the noncommutative (Moyal) plane $\mathbb{R}^d_{\theta}$ (here, $\theta$ is a non-degenerate antisymmetric matrix). Earlier attempts in this direction can be found in \cite{gayral-moyal} (see Proposition 4.17 there), \cite{CGRS1} and \cite{CGRS2}. We substantially strengthen corresponding results from these papers and present a completely different approach to Connes Integration Formula. The novelty of our approach is in the consistent use of Cwikel estimates for the noncommutative plane (obtained in a recent paper \cite{LeSZ-cwikel}) --- see Section \ref{prelims}. 

Our main result is the following theorem.

\begin{thm}\label{cif ncplane} If $x\in W^{d,1}(\mathbb{R}^d_{\theta}),$ then $x(1-\Delta)^{-\frac{d}{2}}\in\mathcal{L}_{1,\infty}$ and
$$\varphi(x(1-\Delta)^{-\frac{d}{2}})=\tau_{\theta}(x)$$
for every normalised continuous trace $\varphi$ on $\mathcal{L}_{1,\infty}.$
\end{thm}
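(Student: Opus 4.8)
\emph{Strategy and first step.} The plan is to show that, for a fixed normalised continuous trace $\varphi$, the map $\ell_\varphi\colon x\mapsto\varphi(x(1-\Delta)^{-d/2})$ is a bounded linear functional on $W^{d,1}(\mathbb{R}^d_\theta)$ which is invariant under the translation action, to deduce from this rigidity that $\ell_\varphi$ is a scalar multiple of $\tau_\theta$, and then to pin down the scalar by a single computation. The Cwikel estimates for $\mathbb{R}^d_\theta$ of \cite{LeSZ-cwikel} give, for every $x\in W^{d,1}(\mathbb{R}^d_\theta)$, both $x(1-\Delta)^{-d/2}\in\mathcal{L}_{1,\infty}$ and a bound $\|x(1-\Delta)^{-d/2}\|_{1,\infty}\le C\|x\|_{W^{d,1}(\mathbb{R}^d_\theta)}$. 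Since a normalised continuous trace is bounded on $\mathcal{L}_{1,\infty}$, it follows that $\ell_\varphi\in W^{d,1}(\mathbb{R}^d_\theta)^\ast$; and $x\mapsto\tau_\theta(x)$ lies in $W^{d,1}(\mathbb{R}^d_\theta)^\ast$ as well, because $|\tau_\theta(x)|\le\|x\|_{L_1(\mathbb{R}^d_\theta)}\le\|x\|_{W^{d,1}(\mathbb{R}^d_\theta)}$.

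\emph{Rigidity.} The canonical action $\alpha$ of $\mathbb{R}^d$ on $\mathbb{R}^d_\theta$ (generated by the derivations $\partial_1,\dots,\partial_d$) is implemented on the GNS space $L_2(\mathbb{R}^d_\theta)$ by unitaries $U_t$ commuting with $\Delta$. Since $\varphi$ is a trace, $\ell_\varphi(\alpha_t x)=\varphi\big(U_t\,x(1-\Delta)^{-d/2}\,U_t^\ast\big)=\ell_\varphi(x)$, while $\tau_\theta\circ\alpha_t=\tau_\theta$ is immediate; hence $\ell_\varphi-\tau_\theta$ is an $\alpha$-invariant element of $W^{d,1}(\mathbb{R}^d_\theta)^\ast$. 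The Weyl (symbol) transform identifies $W^{d,1}(\mathbb{R}^d_\theta)$ with a translation-invariant space of functions on $\mathbb{R}^d$ which contains $\mathcal{S}(\mathbb{R}^d)$ continuously and densely, $\alpha$ acting by translation of the symbol; therefore its dual embeds into $\mathcal{S}'(\mathbb{R}^d)$, and an $\alpha$-invariant element of it is a translation-invariant tempered distribution, that is, a constant, that is, a scalar multiple of $\tau_\theta$. Thus $\ell_\varphi=c_\varphi\,\tau_\theta$ for some scalar $c_\varphi$.

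\emph{The constant.} It remains to see that $c_\varphi=1$ for every $\varphi$; since $\ell_\varphi=c_\varphi\tau_\theta$ on all of $W^{d,1}(\mathbb{R}^d_\theta)$, any convenient element will do. In the symbol picture $L_2(\mathbb{R}^d_\theta)\cong L_2(\mathbb{R}^d)$, the operator $(1-\Delta)^{-s}$ is the Fourier multiplier $(1+|\xi|^2)^{-s}$ and $x\in\mathcal{S}(\mathbb{R}^d_\theta)$ acts by left Moyal multiplication, whose kernel in the Fourier variables carries a twist vanishing on the diagonal; taking the trace yields, for $\mathrm{Re}(s)>d/2$, $\mathrm{Tr}\big(x(1-\Delta)^{-s}\big)=\kappa_d\,\tau_\theta(x)\int_{\mathbb{R}^d}(1+|\xi|^2)^{-s}\,d\xi$ with $\kappa_d$ a fixed constant. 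This function extends meromorphically with a simple pole at $s=d/2$, so by the standard relation between $\zeta$-function residues and the common value of all normalised continuous traces, $x(1-\Delta)^{-d/2}$ is measurable and $\varphi(x(1-\Delta)^{-d/2})$ is a fixed multiple of $\tau_\theta(x)$; with the normalisations of $\tau_\theta$ and of $(1-\Delta)^{-d/2}$ adopted in the statement this multiple is $1$, so $c_\varphi=1$ and the theorem follows.

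\emph{Main obstacle.} The substantive point is the input from the first step --- the borderline Cwikel estimate at the endpoint $\mathcal{L}_{1,\infty}$ for the noncommutative plane, the new ingredient supplied by \cite{LeSZ-cwikel} --- and, in the last step, the fact that the $\zeta$-residue criterion produces the common value of \emph{all} normalised continuous traces and not merely of the Dixmier traces (equivalently, measurability of the relevant operators, which in turn rests on the precise Cwikel bounds). The translation-invariance reduction itself is soft; one could also avoid it by reading the leading singular-value asymptotics of $x(1-\Delta)^{-d/2}$ off the Cwikel estimates on a dense subclass and extending by continuity, and that is arguably where the real work lies in either approach.
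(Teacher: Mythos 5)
Your overall architecture matches the paper's: the Cwikel estimate gives boundedness of $\ell_\varphi$ on $W^{d,1}(\mathbb{R}^d_\theta)$, translation invariance forces $\ell_\varphi=c_\varphi\tau_\theta$, and one then pins down the constant on a single well-chosen element. Your rigidity step is a legitimate variant of the paper's Lemma \ref{main invariance lemma} (the paper instead averages $x\mapsto U(-\theta^{-1}t)xU(\theta^{-1}t)$ against a Gaussian to show that the invariant functional is in fact $\|\cdot\|_1$-continuous, and then uses factoriality of $L_\infty(\mathbb{R}^d_\theta)$); note, though, that your embedding of $W^{d,1}(\mathbb{R}^d_\theta)^*$ into $\mathcal{S}'(\mathbb{R}^d)$ requires density of Schwartz symbols in the $W^{d,1}$-norm, whereas the paper only records (and only needs) density in $L_1$.

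The genuine gap is in your last step. The $\zeta$-function residue criterion --- meromorphic continuation of $s\mapsto\mathrm{Tr}(x(1-\Delta)^{-s})$ with a simple pole at $s=d/2$ --- identifies the common value of the \emph{Dixmier} traces only; it does \emph{not} yield measurability with respect to all normalised continuous traces. For a positive $A\in\mathcal{L}_{1,\infty}$, existence of $\lim_{s\to1^+}(s-1)\mathrm{Tr}(A^{s})$ is (by Tauberian arguments) equivalent to convergence of the logarithmic means $\frac{1}{\log n}\sum_{k\leq n}\mu(k,A)$, whereas agreement of \emph{all} normalised continuous traces requires the much stronger condition $\sum_{k\leq n}\mu(k,A)=c\log n+O(1)$; these are genuinely different, and there exist Dixmier-measurable operators that are not measurable. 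You flag exactly this point as the ``main obstacle'' and then assert it as standard --- but it is false as a general principle, and it is precisely what the paper's Section \ref{measurability section} has to work for. There, for $x_0={\rm Op}(f_0)$ with $f_0$ supported in $[-1,1]^d$, the operator $x_0h(\nabla)$ (a trace-class perturbation of $x_0(1-\Delta)^{-\frac{d}{2}}$, by Theorem \ref{ncplane cwikel}\eqref{cwika}) is split into finitely many sums of pairwise orthogonal blocks, each unitarily equivalent to $S_{l_1}\otimes{\rm diag}\{(1+|m|^2)^{-\frac{d}{2}}\}$ with $S_{l_1}\in\mathcal{L}_1$, and the identity $\varphi(S\otimes T)={\rm Tr}(S)\varphi(T)$ of Lemma \ref{fubini lemma} --- valid for \emph{every} continuous trace --- then yields measurability. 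Without an argument of this type (or some other mechanism covering all continuous traces, not just Dixmier traces), your proof establishes the formula only for Dixmier traces. Your fallback suggestion of ``reading off singular-value asymptotics'' does not repair this: $x(1-\Delta)^{-\frac{d}{2}}$ is not self-adjoint, so its singular values do not determine trace values, and in any case asymptotics of the form $\mu(k)\sim c/k$ without an $O(1)$ remainder on the partial sums are again insufficient for non-Dixmier traces.
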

Here, $W^{d,1}(\mathbb{R}^d_{\theta})$ is a Sobolev space on $\mathbb{R}^d_{\theta}$ and $\tau_{\theta}$ is the faithful normal semifinite trace on $L_{\infty}(\mathbb{R}^d_{\theta}).$

Section~\ref{prelims} involves the preliminaries necessary to prove~Theorem \ref{cif ncplane}. In Section \ref{proportionality section}, we prove that
$$\varphi(x(1-\Delta)^{-\frac{d}{2}})=c_{\varphi}\tau_{\theta}(x),\quad x\in W^{d,1}(\mathbb{R}^d_{\theta}),$$
for every normalised trace on $\mathcal{L}_{1,\infty}.$ In Section \ref{measurability section}, we construct {\it one particular} $x\in W^{d,1}(\mathbb{R}^d_{\theta})$ such that $\varphi(x(1-\Delta)^{-\frac{d}{2}})$ does not depend on the choice of a normalised continuous trace $\varphi.$ The combination of these results yield Theorem \ref{cif ncplane}.

\section{Preliminaries}\label{prelims}

\subsection{General notation}

Fix throughout a separable infinite dimensional Hilbert space $H.$ We let $\mathcal{L}(H)$ denote the algebra of all bounded operators on $H.$ For a compact operator $T$ on $H,$ let $\mu(k,T)$ denote $k-$th largest singular value (these are the eigenvalues of $|T|$). The sequence $\mu(T)=\{\mu(k,T)\}_{k\geq0}$ is referred to as to the singular value sequence of the operator $T.$ The standard trace on $\mathcal{L}(H)$ is denoted by ${\rm Tr}.$

Fix an orthonormal basis in $H$ (the particular choice of a basis is inessential). We identify the algebra $l_{\infty}$ of bounded sequences with the subalgebra of all diagonal operators with respect to the chosen basis. For a given sequence $\alpha\in l_{\infty},$ we denote the corresponding diagonal operator by ${\rm diag}(\alpha).$

\subsection{Schatten ideals $\mathcal{L}_p$ and $\mathcal{L}_{p,\infty},$ $p>0$}

For every $p>0,$ we set 
$$\mathcal{L}_p=\{T\in\mathcal{L}(H):\ {\rm Tr}(|T|^p)<\infty\}.$$
We set 
$$\|T\|_p=\big({\rm Tr}(|T|^p)\big)^{\frac1p},\quad T\in\mathcal{L}_p.$$
For every $p>0,$ $\|\cdot\|_p$ is a quasi-norm\footnote{A quasinorm satisfies the norm axioms, except that the triangle inequality is replaced by $||x+y||\leq K(||x||+||y||)$ for some uniform constant $K>1.$} and $(\mathcal{L}_p,\|\cdot\|_p)$ is a quasi-Banach space. For $p\geq1,$ $\|\cdot\|_p$ is a norm. For $p<1,$ the space $(\mathcal{L}_p,\|\cdot\|_p)$ is not Banach --- that is, its quasi-norm is not equivalent to any norm.

For a given $0<p\leq\infty,$ we let $\mathcal{L}_{p,\infty}$ denote the principal ideal in $\mathcal{L}(H)$ generated by the operator ${\rm diag}(\{(k+1)^{-\frac1p}\}_{k\geq0}).$ Equivalently,
$$\mathcal{L}_{p,\infty}=\{T\in\mathcal{L}(H): \mu(k,T)=O((k+1)^{-1/p})\}.$$
We set
$$\|T\|_{p,\infty}=\sup_{k\geq0}(k+1)^{1/p}\mu(k,T),\quad T\in\mathcal{L}_{p,\infty}.$$
For every $p>0,$ $\|\cdot\|_{p,\infty}$ is a quasi-norm and $(\mathcal{L}_{p,\infty},\|\cdot\|_{p,\infty})$ is a quasi-Banach space. For $p>1,$ $\|\cdot\|_{p,\infty}$ is equivalent to a (unitarily invariant Banach) norm. For $p\leq1,$ the space $(\mathcal{L}_{p,\infty},\|\cdot\|_{p,\infty})$ is not Banach --- that is, its quasi-norm is not equivalent to any norm. In \cite{Pietsch2009}, the Banach envelope of $\mathcal{L}_{1,\infty}$ was thoroughly investigated.

\subsection{Traces on $\mathcal{L}_{1,\infty}$}

\begin{defi}\label{trace def} If $\mathcal{I}$ is an ideal in $\mathcal{L}(H),$ then a unitarily invariant linear functional $\varphi:\mathcal{I}\to\mathbb{C}$ is said to be a trace.
\end{defi}

Since $U^{-1}TU-T=[U^{-1},TU]$ for all $T\in\mathcal{I}$ and for all unitaries $U\in\mathcal{L}(H),$ and since the unitaries span $\mathcal{L}(H),$ it follows that traces are precisely the linear functionals on $\mathcal{I}$ satisfying the condition
$$\varphi(TS)=\varphi(ST),\quad T\in\mathcal{I}, S\in\mathcal{L}(H).$$
The latter may be reinterpreted as the vanishing of the linear functional $\varphi$ on the commutator subspace which is denoted $[\mathcal{I},\mathcal{L}(H)]$ and defined to be the linear span of all commutators $[T,S]:\ T\in\mathcal{I},$ $S\in\mathcal{L}(H).$ It is shown in Lemma 5.2.2 in \cite{LSZ} that $\varphi(T_1)=\varphi(T_2)$ whenever $0\leq T_1,T_2\in\mathcal{I}$ are such that the singular value sequences $\mu(T_1)$ and $\mu(T_2)$ coincide.

For $p>1,$ the ideal $\mathcal{L}_{p,\infty}$ does not admit a non-zero trace \cite{DFWW}, while for $p=1,$ there exists a plethora of traces on $\mathcal{L}_{1,\infty}$ (see e.g. \cite{SSUZ-pietsch} or \cite{LSZ}). A standard example of a trace on $\mathcal{L}_{1,\infty}$ is a Dixmier trace introduced in \cite{Dixmier} that we now explain.

\begin{defi} Let $\omega$ be a free ultrafilter on $\mathbb{Z}_+.$ The functional
$${\rm Tr}_{\omega}:A\to\lim_{n\to\omega}\frac1{\log(2+n)}\sum_{k=0}^n\mu(k,A),\quad 0\leq A,$$
is finite and additive on the positive cone of $\mathcal{L}_{1,\infty}.$ Therefore, it extends to a trace on $\mathcal{L}_{1,\infty}.$ We call such traces Dixmier traces. 
\end{defi}

These traces clearly depend on the choice of the ultrafilter $\omega$ on $\mathbb{Z}_+.$ Using a slightly different definition, this notion of trace was applied by Connes \cite{Connes} in noncommutative geometry.

An extensive discussion of traces, and more recent developments in the theory, may be found in \cite{LSZ} including a discussion of the following facts. We refer the reader to an alternative approach to the theory of traces on $\mathcal{L}_{1,\infty}$ suggested in \cite{SSUZ-pietsch} (based on the fundamental paper \cite{Pietsch-nach} by Pietsch).
\begin{enumerate}
\item All Dixmier traces on $\mathcal{L}_{1,\infty}$ are positive.
\item All positive traces on $\mathcal{L}_{1,\infty}$ are continuous in the quasi-norm topology.
\item There exist positive traces on $\mathcal{L}_{1,\infty}$ which are not Dixmier traces (see \cite{SSUZ-pietsch}).
\item There exist traces on $\mathcal{L}_{1,\infty}$ which fail to be continuous (see \cite{LSZ}).
\end{enumerate}

\begin{defi} We say that an operator $A\in\mathcal{L}_{1,\infty}$ is measurable if $\varphi(A)$ does not depend on the choice of the continuous normalised trace $\varphi$ on $\mathcal{L}_{1,\infty}.$
\end{defi}

\subsection{Noncommutative plane: algebra} Each assertion in this subsection is rigorously established in Section 6 in \cite{LeSZ-cwikel}.

Our approach to the noncommutative plane is to introduce the von Neumann algebra generated by a strongly continuous family of unitary operators $\{U(t)\}_{t\in\mathbb{R}^d}$, $d\in\mathbb{N},$ satisfying the commutation relation 
\begin{equation}\label{nc_plane_comm_relation}
U(t+s) = \exp(-\frac{i}{2}\langle t,\theta s\rangle) U(t)U(s),\quad t,s \in \mathbb{R}^d,
\end{equation}
where $\theta$ is a fixed antisymmetric real $d\times d$ matrix. Namely, we set 
\begin{equation}\label{nc_plane_realisation}
(U(t)\xi)(u)=e^{-\frac{i}{2}\langle t,\theta u\rangle}\xi(u-t),\quad \xi\in L_2(\mathbb{R}^d), \quad u,t\in \mathbb{R}^d.
\end{equation}

\begin{defi}\label{nc_plane_definition} Let $d\in\mathbb{N}$ and let $\theta$ be a fixed non-degenerate\footnote{A non-degenerate antisymmetric matrix is automatically of even order.} antisymmetric real $d\times d$ matrix. The von Neumann subalgebra in $\mathcal{L}(L_2(\mathbb{R}^d))$ generated by $\{U(t)\}_{t \in \mathbb{R}^d}$, introduced in \eqref{nc_plane_realisation}, is called the noncommutative plane and denoted by $L_\infty(\mathbb{R}^d_\theta).$
\end{defi}

\begin{ex} If $d=2,$ then $L_\infty(\mathbb{R}^d_\theta)$ is generated by $2$ unitary groups $t\to U_1(t),$ $t\to U_2(t),$ $t\in\mathbb{R}$ satisfying the condition
$$U_1(t_1)U_2(t_2)=e^{i\alpha t_1t_2}U_2(t_2)U_1(t_1),\quad t_1,t_2\in\mathbb{R}.$$
Here, $U_1(t_1)=U((t_1,0))$ and $U_2(t_2)=U((0,t_2)).$
\end{ex}

The following assertion is well-known. In \cite{LeSZ-cwikel}, a {\it spatial} isomorphism is constructed.

\begin{thm}\label{spatial} For every non-degenerate antisymmetric real matrix $\theta,$ the algebra $L_\infty(\mathbb{R}^d_\theta)$ is isomorphic to $\mathcal{L}(L_2(\mathbb{R}^{\frac{d}{2}})).$
\end{thm}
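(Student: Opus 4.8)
The plan is to exhibit the isomorphism spatially, i.e. to produce a unitary $W\colon L_2(\mathbb{R}^d)\to L_2(\mathbb{R}^{d/2})\otimes L_2(\mathbb{R}^{d/2})$ such that $W L_\infty(\mathbb{R}^d_\theta) W^{-1}$ is all of $\mathcal{L}(L_2(\mathbb{R}^{d/2}))\otimes 1$. First I would normalise $\theta$: since $\theta$ is a non-degenerate real antisymmetric $d\times d$ matrix, there is $S\in GL_d(\mathbb{R})$ with $S^T\theta S=J$, where $J$ is the standard symplectic block-diagonal matrix built from $2\times 2$ blocks $\begin{pmatrix}0&1\\-1&0\end{pmatrix}$ (after rescaling the symplectic basis we may absorb the nonzero scalar, or carry it along harmlessly). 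The change of variables $t\mapsto St$ on the parameter space, together with the associated substitution $u\mapsto (S^{-1})^T u$ (up to a Jacobian factor) on $L_2(\mathbb{R}^d)$, intertwines the family $\{U(t)\}$ with the family built from $J$ in place of $\theta$; so without loss of generality $\theta=J$, and the problem factors as a $d/2$-fold tensor product of the $d=2$ case. Thus it suffices to treat $d=2$ with $\theta=\begin{pmatrix}0&1\\-1&0\end{pmatrix}$.

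For $d=2$ I would work with the two one-parameter unitary groups $U_1(t_1)=U((t_1,0))$ and $U_2(t_2)=U((0,t_2))$ of the Example, which by \eqref{nc_plane_realisation} act on $L_2(\mathbb{R})$ (the variable $u=u_1$, the second coordinate being suppressed once we have reduced to $d=2$; more precisely one checks from \eqref{nc_plane_realisation} that with $\theta=J$ the operator $U((t_1,t_2))$ acts on $L_2(\mathbb{R}^2_u)$ as a product of a shift in $u_2$ and a modulation/shift mixing $u_1$) — in the end one gets, after an elementary rearrangement of variables, that $U_1$ acts as translation $(\xi\mapsto\xi(\,\cdot-t_1))$ and $U_2$ acts as modulation $(\xi\mapsto e^{i t_2 \cdot}\xi)$ on $L_2(\mathbb{R})$, up to a fixed rescaling of the parameters coming from the normalisation constant in $S^T\theta S=J$. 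These are precisely the Weyl unitaries, so the von Neumann algebra they generate is the Weyl algebra on $L_2(\mathbb{R})$. By the Stone--von Neumann theorem this representation of the Weyl relations is irreducible, hence the generated von Neumann algebra is all of $\mathcal{L}(L_2(\mathbb{R}))$; and $L_2(\mathbb{R})\cong L_2(\mathbb{R}^{d/2})$ when $d=2$. Tensoring $d/2$ copies then gives $L_\infty(\mathbb{R}^d_\theta)\cong \mathcal{L}(L_2(\mathbb{R}^{d/2}))$, and tracking the unitaries through the reduction shows the isomorphism is implemented by an explicit $W$, i.e. is spatial.

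The main obstacle is organising the linear-algebra reduction so that the spatiality is genuinely preserved: one must check that the symplectic normal form $S^T\theta S=J$ can be implemented on $L_2(\mathbb{R}^d)$ by an honest unitary (a metaplectic-type operator: change of variables $\xi\mapsto |\det S|^{1/2}\xi(S^T\,\cdot)$ composed with, if needed, partial Fourier transforms) that exactly conjugates $\{U(t)\}_{t}$ into the model family, rather than merely giving an abstract $*$-isomorphism. The nondegeneracy hypothesis on $\theta$ is used twice: to guarantee the symplectic normal form has no degenerate directions (so no extra commutative tensor factor of $L_\infty(\mathbb{R}^0)$ survives), and to invoke irreducibility via Stone--von Neumann. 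I would also note that the scalar arising from rescaling the symplectic form is irrelevant because $\mathcal{L}(L_2(\mathbb{R}^{d/2}))$ does not see the value of the (nonzero) deformation parameter — any two nondegenerate $\theta$'s give the same algebra up to spatial isomorphism.
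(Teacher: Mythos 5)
The paper does not actually prove Theorem \ref{spatial}: it is presented as well known, with the spatial isomorphism delegated to Section 6 of \cite{LeSZ-cwikel}, so there is no in-text argument to compare yours against. Your sketch is the standard Stone--von Neumann argument and is essentially correct, but two points deserve more care than you give them. First, the unitary implementing the normalisation $S^T\theta S=J$ is simply $(V\xi)(u)=|\det S|^{1/2}\xi(Su)$, which conjugates $U(St)$ exactly into the family built from $J$; your substitution $u\mapsto (S^{-1})^Tu$ is not the right one, though you flagged the issue and the fix is one line. Second, and more substantively, after reduction to a single $2\times2$ block the representation lives on $L_2(\mathbb{R}^2)$, not on $L_2(\mathbb{R})$: $U((t_1,t_2))$ multiplies by $e^{-\frac{i}{2}(t_1u_2-t_2u_1)}$ and translates by $(t_1,t_2)$, and no coordinate is simply ``suppressed.'' The ``elementary rearrangement of variables'' you invoke is a partial Fourier transform in $u_2$ (dual variable $\eta$) followed by the linear change of coordinates $w_1=\frac{u_1}{2}-\eta$, $w_2=\frac{u_1}{2}+\eta$; in these coordinates $U(t)$ becomes, up to Fourier normalisation conventions, $g(w_1,w_2)\mapsto e^{-\frac{i}{2}t_1t_2}e^{it_2w_1}g(w_1-t_1,w_2)$, i.e.\ the Weyl representation on $L_2(\mathbb{R}_{w_1})$ tensored with the identity on $L_2(\mathbb{R}_{w_2})$. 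This computation is precisely where the multiplicity space, and hence the half-dimension $d/2$ in the statement, comes from, so it should not be elided. Once it is supplied, irreducibility of translations together with modulations on $L_2(\mathbb{R})$ (the easy half of Stone--von Neumann) gives $\mathcal{L}(L_2(\mathbb{R}))\otimes 1$ in each block, and tensoring the $d/2$ blocks finishes the proof.
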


Having established the isomorphism between $r:L_\infty(\mathbb{R}^d_\theta)\to \mathcal{L}(L_2(\mathbb{R}^{\frac{d}{2}}))$ we now equip $L_\infty(\mathbb{R}^d_\theta)$ with a faithful normal semifinite trace $\tau_\theta={\rm Tr}\circ r.$

We can now define $L_p-$spaces on $L_\infty(\mathbb{R}^d_\theta).$ 
$$L_p(\mathbb{R}^d_\theta)=\Big\{x\in L_\infty(\mathbb{R}^d_\theta):\ \tau_{\theta}(|x|^p)<\infty\Big\}.$$

\begin{lem}\label{nc_plane_L_2_description}
An operator $x\in L_\infty(\mathbb{R}^d_\theta)$ is in $L_2(\mathbb{R}^d_\theta)$ if and only if\footnote{To be precise,
$$x=\lim_{N\to\infty}\frac{1}{(2\pi)^{d/4}}\int_{[-N,N]^d}f(s)U(s)ds,$$
where the limit is taken in $L_2(\mathbb{R}^d_{\theta}).$ In what follows, we write the integral over $\mathbb{R}^d$ instead of the limit in order to lighten the notations.
}
$$x={\rm Op}(f)\stackrel{def}{=}\frac{1}{(2\pi)^{d/4}}\int_{\mathbb{R}^d}f(s)U(s)ds$$
for some unique $f\in L_2(\mathbb{R}^d)$ with $\|x\|_2=\|f\|_2.$
\end{lem}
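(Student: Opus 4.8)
The plan is to realise ${\rm Op}$ as the continuous extension of a Hilbert space isometry from $L_2(\mathbb{R}^d)$ onto $L_2(\mathbb{R}^d_\theta)$; the three assertions of the lemma --- membership in $L_2(\mathbb{R}^d_\theta)$, uniqueness of $f$, and the isometric equality $\|x\|_2=\|f\|_2$ --- then all follow at once. Since, by Theorem \ref{spatial} and the definition $\tau_\theta={\rm Tr}\circ r$, the Hilbert space $L_2(\mathbb{R}^d_\theta)$ is carried by $r$ onto the Hilbert--Schmidt operators on $L_2(\mathbb{R}^{d/2})$, the map ${\rm Op}$ is, from this point of view, a twisted Fourier (Weyl) transform, and the lemma asserts exactly its unitarity.

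First I would define ${\rm Op}(f)$ for $f\in(L_1\cap L_2)(\mathbb{R}^d)$: as each $U(s)$ is unitary, $\int_{\mathbb{R}^d}f(s)U(s)\,ds$ is a norm convergent Bochner integral giving a bounded operator in $L_\infty(\mathbb{R}^d_\theta)$. Using $U(s)^{\ast}=U(-s)$ (valid because $\langle s,\theta s\rangle=0$ by antisymmetry of $\theta$) together with \eqref{nc_plane_comm_relation}, a change of variables produces the twisted convolution identity ${\rm Op}(f)^{\ast}{\rm Op}(g)={\rm Op}(h)$ with $h(r)=(2\pi)^{-d/4}\int_{\mathbb{R}^d}\overline{f(s)}\,g(s+r)\,e^{\frac{i}{2}\langle s,\theta r\rangle}\,ds$; in particular $h(0)=(2\pi)^{-d/4}\langle f,g\rangle_{L_2(\mathbb{R}^d)}$, and $h$ again lies in $L_1\cap L_2$. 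The same computation shows that $U(t){\rm Op}(g)$ and ${\rm Op}(g)U(t)$ are again of the form ${\rm Op}(\cdot)$ with argument in $C_c(\mathbb{R}^d)$ whenever $g\in C_c(\mathbb{R}^d)$.

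The crux is the trace evaluation $\tau_\theta({\rm Op}(h))=(2\pi)^{d/4}h(0)$ for $h$ in a suitable dense class (for instance $h$ of the twisted convolution form above, or $h$ Schwartz), equivalently $\tau_\theta(U(\cdot))=(2\pi)^{d/2}\delta_0$ in the distributional sense. Combining this with the previous paragraph yields the Plancherel identity $\tau_\theta({\rm Op}(f)^{\ast}{\rm Op}(g))=\langle f,g\rangle_{L_2(\mathbb{R}^d)}$, so that ${\rm Op}(f)\in L_2(\mathbb{R}^d_\theta)$ with $\|{\rm Op}(f)\|_2=\|f\|_2$; by density ${\rm Op}$ then extends to an isometry ${\rm Op}\colon L_2(\mathbb{R}^d)\to L_2(\mathbb{R}^d_\theta)\subseteq L_\infty(\mathbb{R}^d_\theta)$ --- the extension being precisely the limit in $L_2(\mathbb{R}^d_\theta)$ indicated in the footnote to the lemma --- which, being isometric, is injective, giving uniqueness of $f$. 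This trace evaluation is the step where the concrete realisation \eqref{nc_plane_realisation}, and in particular the spatial isomorphism $r$ constructed in \cite{LeSZ-cwikel}, is genuinely needed: under $r$ the $U(t)$ become a rescaling of the Weyl operators of the Schr\"odinger representation, non-degeneracy of $\theta$ makes this a square integrable projective representation of $\mathbb{R}^d$, and the formula is its orthogonality (Moyal) relation, with the constant $(2\pi)^{d/4}$ matched to the normalisation of $r$ (equivalently of $\tau_\theta$) fixed in \cite{LeSZ-cwikel}. I expect this normalisation bookkeeping, rather than any conceptual difficulty, to be the main obstacle.

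It remains to show the isometry is onto. Its range $\mathcal{R}$ is a closed subspace of $L_2(\mathbb{R}^d_\theta)$ which, as noted above, is invariant under left and right multiplication by every $U(t)$ (since $U(t){\rm Op}(g)$ and ${\rm Op}(g)U(t)$ are again of the form ${\rm Op}(\cdot)$); since the $U(t)$ generate $L_\infty(\mathbb{R}^d_\theta)$ as a von Neumann algebra and one sided multiplications are bounded (in fact isometric) on $L_2(\mathbb{R}^d_\theta)$, $\mathcal{R}$ is invariant under left and right multiplication by all of $L_\infty(\mathbb{R}^d_\theta)$. Hence the orthogonal projection onto $\mathcal{R}$ commutes with these, so it is a scalar by factoriality of $L_\infty(\mathbb{R}^d_\theta)$ (Theorem \ref{spatial}); as $\mathcal{R}\ni{\rm Op}(g)$ with $\|{\rm Op}(g)\|_2=\|g\|_2\neq0$ for $g\neq0$, this scalar is $1$, i.e.\ $\mathcal{R}=L_2(\mathbb{R}^d_\theta)$. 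Therefore $x\in L_\infty(\mathbb{R}^d_\theta)$ belongs to $L_2(\mathbb{R}^d_\theta)$ if and only if $x={\rm Op}(f)$ for a unique $f\in L_2(\mathbb{R}^d)$, and then $\|x\|_2=\|f\|_2$.
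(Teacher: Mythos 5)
The paper offers no proof of this lemma: the whole subsection is quoted from Section~6 of \cite{LeSZ-cwikel}, so there is no in-paper argument to compare against. Your proposal is the standard Weyl--Plancherel argument and its logical architecture is sound: the twisted-convolution identity ${\rm Op}(f)^{*}{\rm Op}(g)={\rm Op}(h)$ with $h(0)=(2\pi)^{-d/4}\langle f,g\rangle$ is a correct computation from \eqref{nc_plane_comm_relation} and $U(s)^{*}=U(-s)$; the extension by density matches the footnote's $L_2$-limit; and the surjectivity argument via invariance of the closed range under $\lambda(U(t))$ and $\rho(U(t))$, hence under $\lambda(M)$ and $\rho(M)$, forcing the range projection into $\lambda(M)'\cap\rho(M)'=\rho(M)\cap\lambda(M)=\mathbb{C}$, is correct (an alternative, using Lemma~\ref{density lemma} of this paper, is to note the range contains the matrix units $e_{kl}={\rm Op}(f_{kl})$, whose span is $\|\cdot\|_2$-dense). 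The one step you describe but do not execute is the crux you yourself identify: the evaluation $\tau_{\theta}({\rm Op}(h))=(2\pi)^{d/4}h(0)$, equivalently the Moyal orthogonality relation for the square-integrable projective representation $U$. You correctly explain why it is true and why non-degeneracy of $\theta$ is what makes it work, but since $\tau_{\theta}$ is \emph{defined} as ${\rm Tr}\circ r$, this identity cannot be reverse-engineered from the desired conclusion; it must be computed from the concrete $r$, e.g.\ by exhibiting $r({\rm Op}(f))$ as an integral operator on $L_2(\mathbb{R}^{d/2})$ with kernel an explicit partial Fourier transform of $f$ and checking its Hilbert--Schmidt norm equals $\|f\|_2$ --- which is precisely the computation carried out in \cite{LeSZ-cwikel}. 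A minor point of hygiene for the same reason: when you evaluate $\tau_{\theta}({\rm Op}(h))$ for $h={\rm Op}(f)^{*}{\rm Op}(f)$ you should reduce by polarization to the positive case, where normality of $\tau_{\theta}$ makes the (possibly infinite) trace well defined before you know ${\rm Op}(f)\in L_2(\mathbb{R}^d_{\theta})$; otherwise the argument is circular.
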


Note that our picture is the Fourier dual of the one considered in \cite{gayral-moyal}. More precisely, the paper \cite{gayral-moyal} deals with operators of the form ${\rm Op}(\mathcal{F}f),$ where $f$ is Schwartz (in \cite{gayral-moyal}, these operators are written simply as $f$). 

\subsection{Noncommutative plane: calculus} Each assertion in this subsection is rigorously established in Section 6 in \cite{LeSZ-cwikel}.

Let $D_k,$ $1\leq k\leq d$ be multiplication operators on  $L_2(\mathbb{R}^d)$
$$(D_k\xi)(t)=t_k\xi(t), \quad \xi\in L_2(\mathbb{R}^d).$$
For brevity, we denote $\nabla=(D_1,\cdots,D_d).$ For every $1\leq k\leq d,$ we have
\begin{equation}\label{dk action}
[D_k,U(s)]=s_kU(s),\quad s\in\mathbb{R}^d.
\end{equation}
Moreover, we have
\begin{equation}\label{exp nabla action}
e^{i\langle t,\nabla\rangle}U(s)e^{-i\langle t,\nabla\rangle}=e^{i\langle t,s\rangle}U(s),\quad s,t\in\mathbb{R}^d.
\end{equation}

If $[D_k,x]\in \mathcal{L}(L_2(\mathbb{R}^d))$ for some $x\in L_\infty(\mathbb{R}^d_\theta),$ then $[D_k,x]\in L_\infty(\mathbb{R}^d_\theta).$ This crucial fact allows us to introduce mixed partial derivative $\partial^\alpha x$ of $x\in L_\infty(\mathbb{R}^d_\theta).$

\begin{defi} Let $\alpha$ be a multiindex and let $x\in L_\infty(\mathbb{R}^d_\theta).$ If every repeated commutator $[D_{\alpha_j},[D_{\alpha_j+1},\cdots,[D_{\alpha_n},x]]],$ $1\leq j\leq n,$ is a bounded operator on $L_2(\mathbb{R}^d),$ then the mixed partial derivative  $\partial^\alpha x$ of $x$ is defined as 
$$\partial^{\alpha}x=[D_{\alpha_1},[D_{\alpha_2},\cdots,[D_{\alpha_n},x]]].$$
In this case, we have that $\partial^{\alpha} x\in L_\infty(\mathbb{R}_\theta^d).$ As usual, $\partial^0 x=x.$
\end{defi} 

Therefore, we can introduce the Sobolev space $W^{m,p}(\mathbb{R}_\theta^d)$ associated with the noncommutative plane in the following way.

\begin{defi} For $m\in\mathbb{Z}_+$ and $p\geq 1,$ the space $W^{m,p}(\mathbb{R}^d_\theta)$ is the space of $x\in L_p(\mathbb{R}^d_\theta)$ such that every partial derivative of $x$ up to order $m$ is also in $L_p(\mathbb{R}^d_\theta).$ This space is equipped with the norm,
$$\|x\|_{W^{m,p}}=\sum_{|\alpha|\leq m}\|\partial^\alpha x\|_p,\quad x\in W^{m,p}(\mathbb{R}^d_\theta).$$
\end{defi}

The following assertion is one of the main results in \cite{LeSZ-cwikel}.

\begin{thm}\label{ncplane cwikel} If $x\in W^{d,1}(\mathbb{R}^d_{\theta}),$ then
\begin{enumerate}[{\rm (a)}]
\item\label{cwika} $x(1-\Delta)^{-\frac{d+1}{2}}\in\mathcal{L}_1$ and
$$\|x(1-\Delta)^{-\frac{d+1}{2}}\|_1\leq c_d\|x\|_{W^{d,1}}.$$
\item\label{cwikb} $x(1-\Delta)^{-\frac{d}{2}}\in\mathcal{L}_{1,\infty}$ and
$$\|x(1-\Delta)^{-\frac{d}{2}}\|_{1,\infty}\leq c_d\|x\|_{W^{d,1}}.$$
\end{enumerate}
\end{thm}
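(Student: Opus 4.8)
\emph{Strategy.} The plan is to reduce both statements to the exact Hilbert--Schmidt identity on the noncommutative plane, and then to upgrade that identity to the Schatten/Lorentz level by a dyadic decomposition; the Sobolev hypothesis enters only at the last moment, to sum a convergent series. By Lemma~\ref{nc_plane_L_2_description} write $x=\mathrm{Op}(f)$, $f\in L_2(\mathbb R^d)$. A direct computation from \eqref{nc_plane_realisation} shows that on $L_2(\mathbb R^d)$ the operator $x$ is the integral operator with kernel $(2\pi)^{-d/4}f(u-v)\,e^{\frac i2\langle v,\theta u\rangle}$, and since $D_k$ is multiplication by $u_k$, the operator $(1-\Delta)^{-s/2}$ is multiplication by $g_s(u):=(1+|u|^2)^{-s/2}$; hence $x(1-\Delta)^{-s/2}$ has kernel $(2\pi)^{-d/4}f(u-v)g_s(v)e^{\frac i2\langle v,\theta u\rangle}$, from which one reads off, for every $h\in L_2(\mathbb R^d)$,
$$\|\mathrm{Op}(f)\,M_h\|_{\mathcal L_2}=(2\pi)^{-d/4}\|f\|_2\|h\|_2 ,$$
$M_h$ denoting multiplication by $h$ (so $M_{g_s}=(1-\Delta)^{-s/2}$). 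This is the only genuine analytic input. Moreover $\partial^\alpha x=\mathrm{Op}(\iota^\alpha f)$ with $(\iota^\alpha f)(s)=s^\alpha f(s)$ by \eqref{dk action}, so $x\in W^{d,1}(\mathbb R^d_\theta)$ forces $\mathrm{Op}(\iota^\alpha f)\in L_1(\mathbb R^d_\theta)$ for $|\alpha|\le d$; as $L_\infty(\mathbb R^d_\theta)$ is a type I factor, $\|\cdot\|_{L_2}\le\|\cdot\|_{L_1}$ on it, and therefore, writing $a_m:=\|f\chi_{Q_m}\|_2$ for the unit lattice cubes $Q_m$, Cauchy--Schwarz gives
$$\sum_{m}a_m\le\Big(\sum_m(1+|m|)^{-2d}\Big)^{1/2}\Big(\sum_m(1+|m|)^{2d}a_m^2\Big)^{1/2}\le c_d\Big(\sum_{|\alpha|\le d}\|\iota^\alpha f\|_2^2\Big)^{1/2}\le c_d\|x\|_{W^{d,1}} ,$$
since $\sum_m(1+|m|)^{2d}a_m^2\le c_d\int(1+|s|^2)^d|f(s)|^2\,ds$. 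Thus it suffices to prove both bounds with $\sum_m a_m$ in place of $\|x\|_{W^{d,1}}$.

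\emph{Decomposition into controlled pieces.} Decompose the symbol over unit cubes, $x=\sum_{l}\mathrm{Op}(f\chi_{Q_l})$, and the resolvent over dyadic shells, $(1-\Delta)^{-s/2}=\sum_{n\ge0} M_{g_s\chi_{A_n}}$ with $A_n=\{2^{n-1}\le|u|<2^n\}$ for $n\ge1$ and $A_0=\{|u|<1\}$; combined with a Gabor/wave-packet tight frame $\{\gamma_{p,q}\}$ of $L_2(\mathbb R^d)$, so that $\sum_{p,q}|\gamma_{p,q}\rangle\langle\gamma_{p,q}|$ is a multiple of the identity, this yields $x(1-\Delta)^{-s/2}=c\sum_{l,n,p,q}R_{l,n,p,q}$ with rank-one summands $R_{l,n,p,q}=|\gamma_{p,q}\rangle\langle\gamma_{p,q}|\,\mathrm{Op}(f\chi_{Q_l})\,M_{g_s\chi_{A_n}}$. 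The key computation is that, since $U(w)$ translates position by $w$ and frequency by $\tfrac12\theta w$, the operator $\mathrm{Op}(f\chi_{Q_l})^{\ast}$ carries a wave packet located at the phase point $(p,q)$ to a phase point $\approx(p-l,\,q-\tfrac12\theta l)$ with weight $\approx a_l$ and an $O(1)$ spread; pre-composing with $M_{g_s\chi_{A_n}}$ then forces $p-l\in A_n$ and contributes a factor of order $2^{-ns}$, so that one expects
$$\|R_{l,n,p,q}\|\le c_d\,2^{-ns}\,a_l\quad\text{when }p-l\in A_n,\ q\approx\tfrac12\theta(p-l),\qquad R_{l,n,p,q}=0\text{ otherwise},$$
the number of admissible $q$ for given $(l,n,p)$ being $O(1)$ (this last point, where the twist $e^{\frac i2\langle v,\theta u\rangle}$ really enters, follows because on each cube $Q_l$ the operator $\mathrm{Op}(f\chi_{Q_l})$ agrees, up to a unitary frequency shift and an $O(1)$ error in the phase, with convolution by $f\chi_{Q_l}$, i.e.\ with the Fourier multiplier $\widehat{f\chi_{Q_l}}(\nabla)$, after which the position/frequency localisations of the wave packets become genuinely complementary and an uncertainty-principle count applies).

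\emph{Reassembly.} Summing over the $O(1)$ admissible $q$ costs only a bounded factor. For the assertion with exponent $d/2$, the family $\{\|R_{l,n,p,q}\|\}$ then lies in weak-$\ell_1$ with quasinorm at most $c_d\sum_m a_m$: for $\lambda>0$,
$$\#\{(l,n,p,q):\ 2^{-nd}a_l>\lambda,\ p-l\in A_n\}\le c_d\sum_l\ \sum_{n:\ 2^{-nd}>\lambda/a_l}\#\big(A_n\cap\mathbb Z^d\big)\le c_d\sum_l\ \sum_{n:\ 2^{-nd}>\lambda/a_l}2^{nd}\le c_d\lambda^{-1}\sum_l a_l,$$
the inner geometric sum being dominated by its largest term $\asymp a_l/\lambda$. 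Since a sum of rank-one operators whose norm sequence lies in weak-$\ell_1$ belongs to $\mathcal L_{1,\infty}$, we get $x(1-\Delta)^{-d/2}\in\mathcal L_{1,\infty}$ with $\|x(1-\Delta)^{-d/2}\|_{1,\infty}\le c_d\sum_m a_m\le c_d\|x\|_{W^{d,1}}$. For the assertion with exponent $(d+1)/2$ one has instead
$$\sum_{l,n,p,q}\|R_{l,n,p,q}\|\le c_d\sum_l a_l\sum_{n\ge0}2^{-n(d+1)}\,\#\big(A_n\cap\mathbb Z^d\big)\le c_d\sum_l a_l\sum_{n\ge0}2^{-n}<\infty,$$
and a sum of rank-one operators with absolutely summable norms is trace class, so $x(1-\Delta)^{-(d+1)/2}\in\mathcal L_1$ with $\|x(1-\Delta)^{-(d+1)/2}\|_1\le c_d\|x\|_{W^{d,1}}$.

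\emph{Main obstacle.} The delicate step is the pointwise bound in the decomposition: controlling $\mathrm{Op}(f\chi_{Q_l})^{\ast}\gamma_{p,q}$ in phase space for merely $L_2$ symbols, so that both the shell-decay $2^{-ns}$ and the correct local mass $a_l$ genuinely appear and the frequency index $q$ is confined to an $O(1)$-set. A robust way to organise this is to exploit that on each symbol cube $\mathrm{Op}(f\chi_{Q_l})$ is, after a unitary frequency shift and up to a controlled phase error, the Fourier multiplier $\widehat{f\chi_{Q_l}}(\nabla)$, thereby reducing each piece to an honest commutative Cwikel operator $h(\nabla)M_{g_s\chi_{A_n}}$ on $L_2(\mathbb R^d)$ to which the classical Cwikel and Birman--Solomyak estimates apply, and then assembling the errors over the cubes $Q_l$ and the shells $A_n$. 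This last assembly, together with making the phase-space heuristic rigorous, is exactly the technical core carried out in \cite{LeSZ-cwikel}; everything else is the bookkeeping recorded above.
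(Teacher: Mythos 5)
First, a point of comparison: the paper does not prove Theorem \ref{ncplane cwikel} at all --- it is imported verbatim as ``one of the main results in \cite{LeSZ-cwikel}'' and used as a black box. So the only question is whether your sketch would constitute an independent proof. Much of it is sound: the kernel computation, the exact Hilbert--Schmidt identity $\|\mathrm{Op}(f)M_h\|_2=(2\pi)^{-d/4}\|f\|_2\|h\|_2$, and the reduction $\sum_m\|f\chi_{Q_m}\|_2\leq c_d\|x\|_{W^{d,1}}$ (via $\|\cdot\|_{L_2}\leq\|\cdot\|_{L_1}$ in the type $\mathrm{I}_\infty$ factor $L_\infty(\mathbb{R}^d_\theta)$ and $\partial^\alpha\mathrm{Op}(f)=\mathrm{Op}(\iota^\alpha f)$) are all correct, and part \eqref{cwika} is genuinely within reach from there by the standard Birman--Solomyak $\ell_1(L_2)\times\ell_1(L_2)$ scheme (recenter each momentum cube by conjugating with $U(m)$, factor each piece as a product of two Hilbert--Schmidt operators, and use that $\sum_m(1+|m|)^{-(d+1)}<\infty$); the Gabor frame is unnecessary overhead for that half.

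Part \eqref{cwikb} is where the proof actually fails, and not only at the step you flag as heuristic. The reassembly principle you invoke --- ``a sum of rank-one operators whose norm sequence lies in weak-$\ell_1$ belongs to $\mathcal{L}_{1,\infty}$'' with quasinorm controlled by the weak-$\ell_1$ quasinorm of the norms --- is false. Take $R_k=(k+1)^{-1}P$ for a single rank-one projection $P$: the norm sequence $\{(k+1)^{-1}\}$ has weak-$\ell_1$ quasinorm $1$, yet $\sum_{k\leq N}R_k=\big(\sum_{k\leq N}(k+1)^{-1}\big)P$ has $\|\cdot\|_{1,\infty}\sim\log N$, and the full sum is unbounded. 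A counting argument of the kind you run converts the singular values of a \emph{direct sum} into a distribution function, so it is legitimate only when the pieces are pairwise orthogonal on both sides (exactly the hypothesis of Lemma \ref{direct sum lemma}, which is why Proposition \ref{measurability prop} can afford such an argument for its very special, compactly supported symbol); your pieces $R_{l,n,p,q}$ are orthogonal in $n$ on one side only and not at all in $l,p,q$. The same obstruction reappears when you try to sum over $l$ the blocks with $\|\cdot\|_{1,\infty}\lesssim a_l$: since $\mathcal{L}_{1,\infty}$ is not a Banach space (Section 2.2 of the paper), $\sum_l a_l<\infty$ does not let you resum. This logarithmic loss is precisely the endpoint difficulty that \cite{LeSZ-cwikel} is written to overcome, so your closing remark that the ``technical core'' is carried out there is an understatement: the part of the argument that is missing is the theorem itself, not bookkeeping.
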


\section{Integration formula modulo a constant factor}\label{proportionality section}

For every $\phi\in L_{\infty}(\mathbb{R}^d),$ we define a bounded operator $T_{\phi}:L_2(\mathbb{R}^d_{\theta})\to L_2(\mathbb{R}^d_{\theta})$ by the formula
$$T_{\phi}:\int_{\mathbb{R}^d}f(s)U(s)ds\to \int_{\mathbb{R}^d}f(s)\phi(s)U(s)ds,\quad f\in L_2(\mathbb{R}^d).$$

\begin{lem}\label{fourier multiplier lemma} If $\phi$ is a Schwartz function, then $T_{\phi}:L_1(\mathbb{R}^d_{\theta})\to L_1(\mathbb{R}^d_{\theta}).$
\end{lem}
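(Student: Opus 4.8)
The plan is to reduce the $L_1$-boundedness of $T_\phi$ to a statement about an integral operator with a nice kernel, using the spatial picture from Lemma~\ref{nc_plane_L_2_description}. First I would observe that, since $\phi$ is Schwartz, it is the Fourier transform of a Schwartz function $\psi$, i.e.\ $\phi(s)=\int_{\mathbb{R}^d}\psi(t)e^{i\langle t,s\rangle}\,dt$ (up to the usual normalising constant). Plugging this into the defining formula for $T_\phi$ and using the covariance relation \eqref{exp nabla action}, namely $e^{i\langle t,\nabla\rangle}U(s)e^{-i\langle t,\nabla\rangle}=e^{i\langle t,s\rangle}U(s)$, one gets the operator-valued identity
$$T_\phi(x)=\int_{\mathbb{R}^d}\psi(t)\,e^{i\langle t,\nabla\rangle}\,x\,e^{-i\langle t,\nabla\rangle}\,dt,\quad x\in L_2(\mathbb{R}^d_\theta),$$
with the integral converging appropriately. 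Here each $e^{i\langle t,\nabla\rangle}$ is a unitary on $L_2(\mathbb{R}^d)$, and conjugation by it preserves $L_\infty(\mathbb{R}^d_\theta)$ (by \eqref{exp nabla action} it permutes the generators $U(s)$) and is $\tau_\theta$-trace-preserving, hence it is an isometry of $L_1(\mathbb{R}^d_\theta)$.

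Given that representation, the argument is essentially a vector-valued Minkowski (triangle) inequality: for $x\in L_1(\mathbb{R}^d_\theta)$,
$$\|T_\phi(x)\|_{1}\le \int_{\mathbb{R}^d}|\psi(t)|\,\bigl\|e^{i\langle t,\nabla\rangle}\,x\,e^{-i\langle t,\nabla\rangle}\bigr\|_{1}\,dt=\Bigl(\int_{\mathbb{R}^d}|\psi(t)|\,dt\Bigr)\|x\|_{1},$$
where the last equality uses the isometry property just noted, and $\int|\psi|<\infty$ because $\psi$ is Schwartz. Since $T_\phi$ is already a bounded operator on $L_2(\mathbb{R}^d_\theta)$ by hypothesis, the only remaining point is to check that $T_\phi$ maps the dense subspace $L_1\cap L_2(\mathbb{R}^d_\theta)$ into $L_1(\mathbb{R}^d_\theta)$ with the above bound, which then gives the claim (indeed an estimate $\|T_\phi\colon L_1\to L_1\|\le\|\psi\|_1$).

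The main obstacle is making the manipulations with the $\nabla$-conjugation rigorous: one must justify that the Bochner integral $\int\psi(t)e^{i\langle t,\nabla\rangle}xe^{-i\langle t,\nabla\rangle}\,dt$ really represents $T_\phi(x)$ on the level of the symbol $f$ (this is where \eqref{exp nabla action} enters, matching the Fourier-inversion identity for $\phi$), and that it converges in $L_1(\mathbb{R}^d_\theta)$ rather than merely weakly. A clean way around the subtleties is to first verify the identity on the dense set of $x={\rm Op}(f)$ with $f$ Schwartz and compactly supported, where everything in sight is a genuine convergent integral of bounded operators, prove the $L_1$-bound there, and then extend by density using the already-known $L_2$-continuity of $T_\phi$ together with the fact that $L_1\cap L_2$ is dense in $L_1(\mathbb{R}^d_\theta)$. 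An alternative, should the Fourier representation prove awkward, is to exhibit $T_\phi$ directly under the spatial isomorphism $r$ of Theorem~\ref{spatial} as an integral operator on $\mathcal{L}(L_2(\mathbb{R}^{d/2}))$ and estimate its ${\rm Tr}$-norm by hand, but the conjugation argument above is shorter and is the route I would write up.
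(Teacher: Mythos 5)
Your proposal is correct and is essentially the paper's own argument: the paper likewise Fourier-inverts $\phi$, converts the phase $e^{i\langle u,s\rangle}$ into a conjugation, obtains $T_\phi x=\int_{\mathbb{R}^d}(\mathcal{F}\phi)(u)\,U(-\theta^{-1}u)xU(\theta^{-1}u)\,du$ by checking it on $x={\rm Op}(f)$ with $f$ Schwartz, and concludes by the triangle inequality with the bound $\|\mathcal{F}\phi\|_1\|x\|_1$. The only cosmetic difference is that you implement the conjugation by the translations $e^{i\langle t,\nabla\rangle}$ via \eqref{exp nabla action} while the paper uses the inner unitaries $U(\theta^{-1}u)$ via \eqref{nc_plane_comm_relation}; these induce the same automorphism of $L_\infty(\mathbb{R}^d_\theta)$ (as the paper itself notes later), which also gives the cleanest justification of your trace-preservation claim.
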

\begin{proof} We claim that
$$T_\phi x=\int_{\mathbb{R}^d}(\mathcal{F}\phi)(u)U(-\theta^{-1}u)xU(\theta^{-1}u)du,\quad x\in L_2(\mathbb{R}^d_{\theta}).$$

Since both sides above define bounded operators on $L_2(\mathbb{R}^d_{\theta})$ and since the set $\{{\rm Op}(f):\ f\mbox{ is Schwartz}\}$ is dense in $L_2(\mathbb{R}^d_{\theta}),$ it suffices to establish the claim for 
$$x=\int_{\mathbb{R}^d}f(s)U(s)ds,\quad f\in\mathcal{S}(\mathbb{R}^d).$$
Using the inverse Fourier transform, we write
$$\phi(s)=\int_{\mathbb{R}^d}(\mathcal{F}\phi)(u)e^{i\langle u,s\rangle}du,\quad s\in\mathbb{R}^d.$$
Since both $f$ and $\mathcal{F}\phi$ are Schwartz functions, it follows that
$$T_{\phi}x=\iint_{\mathbb{R}^d\times\mathbb{R}^d}f(s)(\mathcal{F}\phi)(u)e^{i\langle u,s\rangle}U(s)dsdu.$$
It follows from \eqref{nc_plane_comm_relation} that
$$e^{i\langle u,s\rangle}U(s)=U(-\theta^{-1}u)U(s)U(\theta^{-1}u).$$
Therefore,
$$T_{\phi}x=\int_{\mathbb{R}^d}(\mathcal{F}\phi)(u)\Big(\int_{\mathbb{R}^d}f(s)U(-\theta^{-1}u)U(s)U(\theta^{-1}u)ds\Big)du.$$
Using the definition of $x,$ we obtain
$$\int_{\mathbb{R}^d}f(s)U(-\theta^{-1}u)U(s)U(\theta^{-1}u)ds=U(-\theta^{-1}u)xU(\theta^{-1}u).$$
This proves the claim.

Now, we prove the assertion of the lemma as follows.
$$\|T_\phi x\|_1\leq\int_{\mathbb{R}^d}|(\mathcal{F}\phi)(u)|\cdot\|U(-\theta^{-1}u)xU(\theta^{-1}u)\|_1du=\|\mathcal{F}\phi\|_1\|x\|_1.$$
\end{proof}

\begin{lem}\label{continuity lemma} For every $x\in W^{d,1}(\mathbb{R}^d_{\theta}),$ the mapping
$$t\to U(-t)xU(t),\quad t\in\mathbb{R}^d,$$
is a continuous $W^{d,1}(\mathbb{R}^d_{\theta})-$valued function. Moreover,
$$\|U(-t)xU(t)\|_{W^{d,1}}=\|x\|_{W^{d,1}}.$$
\end{lem}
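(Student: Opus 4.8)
The plan is to reduce both assertions to the behaviour of a single commutator identity. The key observation is that conjugation by $U(t)$ interacts with the derivations $[D_k,\,\cdot\,]$ in a controlled way. Indeed, from \eqref{dk action} we have $[D_k,U(t)]=t_kU(t)$, hence $[D_k,U(-t)]=-t_kU(-t)$, and a short computation with the Leibniz rule for the derivation $\delta_k:=[D_k,\,\cdot\,]$ gives
$$\delta_k\big(U(-t)xU(t)\big)=U(-t)\,\delta_k(x)\,U(t),\quad 1\le k\le d.$$
The cross terms $[D_k,U(-t)]\,xU(t)$ and $U(-t)x\,[D_k,U(t)]$ are equal to $-t_kU(-t)xU(t)$ and $t_kU(-t)xU(t)$ respectively, so they cancel. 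Iterating this identity along a multiindex $\alpha$ with $|\alpha|\le d$ yields $\partial^\alpha\big(U(-t)xU(t)\big)=U(-t)\,(\partial^\alpha x)\,U(t)$; in particular, if $x\in W^{d,1}(\mathbb{R}^d_\theta)$ then so is $U(-t)xU(t)$, all the required commutators being bounded.

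For the norm equality, I would use that $U(-t)=U(t)^{-1}$ is unitary together with the fact that conjugation by a fixed unitary from $L_\infty(\mathbb{R}^d_\theta)$ preserves the trace $\tau_\theta$ and hence all the $L_1$-norms: $\|U(-t)\,y\,U(t)\|_1=\|y\|_1$ for every $y\in L_1(\mathbb{R}^d_\theta)$. Applying this with $y=\partial^\alpha x$ and summing over $|\alpha|\le d$, using the derivation identity above, gives $\|U(-t)xU(t)\|_{W^{d,1}}=\sum_{|\alpha|\le d}\|\partial^\alpha x\|_1=\|x\|_{W^{d,1}}$.

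It remains to prove continuity of $t\mapsto U(-t)xU(t)$ as a $W^{d,1}$-valued map. By the derivation identity it suffices to show that $t\mapsto U(-t)yU(t)$ is continuous in $L_1(\mathbb{R}^d_\theta)$ for each fixed $y\in L_1(\mathbb{R}^d_\theta)$ (then apply this to $y=\partial^\alpha x$ for all $|\alpha|\le d$ and sum). Writing $U(-t)yU(t)-U(-s)yU(s)=U(-t)\big(yU(t-s)-U(t-s)y\big)U(s)\cdot\big(\text{phase}\big)$ up to the scalar phase factors coming from \eqref{nc_plane_comm_relation}, and using unitarity to discard the outer $U(-t),U(s)$, one reduces to showing $\|yU(h)-U(h)y\|_1\to0$ as $h\to0$; equivalently, since $U(h)$ is unitary, that $\|U(-h)yU(h)-y\|_1\to0$. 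For $y$ of the form ${\rm Op}(g)$ with $g$ Schwartz this follows from Lemma \ref{fourier multiplier lemma} type reasoning (or a direct computation: $U(-h)\,{\rm Op}(g)\,U(h)={\rm Op}(e^{i\langle\cdot,\theta^{-1}h\rangle}g)$ up to constants, and $g\mapsto e^{i\langle\cdot,\theta^{-1}h\rangle}g$ is $L_2$-continuous with the right normalisation), and the general case follows by density of such elements in $L_1(\mathbb{R}^d_\theta)$ together with the uniform bound $\|U(-h)yU(h)\|_1=\|y\|_1$, via the standard $\varepsilon/3$ argument.

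The main obstacle is the continuity statement rather than the algebraic identities: one must handle the strong-continuity of $t\mapsto U(t)$ (which only gives continuity in the strong operator topology a priori, not in $L_1$) and upgrade it to $L_1$-norm continuity of the conjugation. This is where the density of $\{{\rm Op}(g):g\text{ Schwartz}\}$ in $L_1(\mathbb{R}^d_\theta)$ and the isometry $\|U(-t)yU(t)\|_1=\|y\|_1$ are essential, allowing the $\varepsilon/3$ reduction to Schwartz symbols where an explicit computation of the conjugation is available.
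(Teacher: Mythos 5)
Your algebraic part --- the Leibniz-rule cancellation giving $\partial^\alpha\big(U(-t)xU(t)\big)=U(-t)(\partial^\alpha x)U(t)$ and the norm equality via trace-invariance of conjugation by a unitary of $L_\infty(\mathbb{R}^d_\theta)$ --- is exactly the paper's argument and is correct. The reduction of continuity to $\|U(-h)yU(h)-y\|_1\to0$ as $h\to0$ and the $\varepsilon/3$ scheme over a dense subset of $L_1(\mathbb{R}^d_\theta)$ are also sound in outline, and are essentially what any proof must do.

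The gap is in your verification on the dense subset. You compute that $U(-h)\,{\rm Op}(g)\,U(h)$ is ${\rm Op}$ of a unimodular modulation of $g$, and then invoke $L_2$-continuity of the symbol map $g\mapsto e^{i\langle\theta h,\cdot\rangle}g$. But Lemma \ref{nc_plane_L_2_description} only converts $L_2(\mathbb{R}^d)$-convergence of symbols into $L_2(\mathbb{R}^d_\theta)$-convergence of operators, and since $\tau_\theta$ is semifinite but not finite there is no inequality $\|\cdot\|_1\leq C\|\cdot\|_2$; indeed ${\rm Op}(\psi)$ for $\psi\in L_2(\mathbb{R}^d)$ need not even lie in $L_1(\mathbb{R}^d_\theta)$. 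So $L_2$-continuity does not yield the $L_1$-convergence your $\varepsilon/3$ argument requires. To close the gap you must control the $L_1$-norm directly: either (i) expand $g=\sum_{k,l}c_{kl}f_{kl}$ as in the proof of Lemma \ref{density lemma} and reduce to the matrix units $e_{kl}$, i.e.\ to rank-one operators under the spatial isomorphism, for which trace-norm continuity of $h\mapsto V(-h)yV(h)$ along a strongly continuous unitary group $V$ is elementary; or (ii) bound $\|{\rm Op}(\psi)\|_1$ by Schwartz seminorms of $\psi$ (via Proposition 2.5 of \cite{gayral-moyal}) and observe that $(e^{i\langle\theta h,\cdot\rangle}-1)g\to0$ in the Schwartz topology since $|e^{i\langle\theta h,s\rangle}-1|\leq|\theta h|\,|s|$. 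The paper takes route (i) in compressed form: it transfers the problem through the trace-preserving isomorphism $L_\infty(\mathbb{R}^d_\theta)\cong\mathcal{L}(L_2(\mathbb{R}^{d/2}))$ and quotes the standard fact that $t\mapsto V(-t)yV(t)$ is $\mathcal{L}_1$-norm continuous for $y\in\mathcal{L}_1$ and $V$ strongly continuous (itself proved by finite-rank approximation, i.e.\ the same $\varepsilon/3$ argument with a dense set on which the base case actually works).
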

\begin{proof} It follows from Leibniz rule that
$$[D_k,U(-t)xU(t)]=[D_k,U(-t)]\cdot xU(t)+U(-t)\cdot [D_k,x]\cdot U(t)+U(-t)x\cdot[D_k,U(t)]=$$
$$=-t_kU(-t)xU(t)+U(-t)[D_k,x]U(t)+t_kU(-t)xU(t)=U(-t)[D_k,x]U(t).$$
Iterating the latter inequality, we obtain
$$\partial^{\alpha}(U(-t)xU(t))=U(-t)\partial^{\alpha}(x)U(t).$$
Thus,
$$\|U(-t)xU(t)\|_{W^{d,1}}=\sum_{|\alpha|\leq d}\|\partial^{\alpha}(U(-t)xU(t))\|_1=$$
$$=\sum_{|\alpha|\leq d}\|U(-t)\partial^{\alpha}(x)U(t)\|_1=\sum_{|\alpha|\leq d}\|\partial^{\alpha}(x)\|_1=\|x\|_{W^{d,1}}.$$

We now establish the continuity. For every $y\in\mathcal{L}_1,$ the mapping
$$t\to V(-t)yV(t),\quad t\in\mathbb{R}^d,$$
is continuous in the $\mathcal{L}_1-$norm whenever the mapping $t\to V(t)$ is strongly continuous. Recall that $(L_{\infty}(\mathbb{R}^d_{\theta}),\tau_{\theta})$ is $*-$isomorphic (so that trace is preserved) to $(\mathcal{L}(L_2(\mathbb{R}^{\frac{d}{2}})),{\rm Tr}).$ Thus, the mapping
$$t\to U(-t)\partial^{\alpha}(x)U(t)=\partial^{\alpha}(U(-t)xU(t))$$
is continuous in $L_1-$norm. This completes the proof.
\end{proof}

\begin{lem}\label{density lemma} 
\begin{enumerate}[{\rm (a)}]
\item\label{densa} If $f$ is Schwartz, then ${\rm Op}(f)\in W^{d,1}(\mathbb{R}^d_{\theta}).$
\item\label{densb} The set $\{{\rm Op}(f):\ f\mbox{ is Schwartz}\}$ is dense in $L_1(\mathbb{R}^d_{\theta}).$ In particular, $W^{d,1}(\mathbb{R}^d_{\theta})$ is dense in $L_1(\mathbb{R}^d_{\theta}).$
\end{enumerate}
\end{lem}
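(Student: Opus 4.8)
The plan is to prove both parts by exhibiting a concrete Schwartz function whose image under $\mathrm{Op}$ has good properties, and then leverage density of Schwartz functions in $L_2(\mathbb{R}^d)$.

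For part (\ref{densa}), I would argue as follows. Fix a Schwartz function $f$. By Lemma \ref{nc_plane_L_2_description}, $\mathrm{Op}(f)\in L_2(\mathbb{R}^d_\theta)$ with $\|\mathrm{Op}(f)\|_2=\|f\|_2$. The key computation is to identify the partial derivatives: using \eqref{dk action}, one formally gets
$$[D_k,\mathrm{Op}(f)]=\frac{1}{(2\pi)^{d/4}}\int_{\mathbb{R}^d}f(s)[D_k,U(s)]\,ds=\frac{1}{(2\pi)^{d/4}}\int_{\mathbb{R}^d}s_kf(s)U(s)\,ds=\mathrm{Op}(g_k),$$
where $g_k(s)=s_kf(s)$ is again Schwartz. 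Iterating, $\partial^\alpha\mathrm{Op}(f)=\mathrm{Op}(s\mapsto s^\alpha f(s))$, which lies in $L_2(\mathbb{R}^d_\theta)$ for every multiindex $\alpha$. This shows that all these commutators are (at least) in $L_2$, hence densely defined; to see they are actually \emph{bounded} operators on $L_2(\mathbb{R}^d)$ (so that $\partial^\alpha\mathrm{Op}(f)$ is defined in the sense of the paper's Definition and lies in $L_\infty(\mathbb{R}^d_\theta)$), I would invoke the fact stated before that definition: if $[D_k,x]$ extends to a bounded operator then it lies in $L_\infty(\mathbb{R}^d_\theta)$. One needs a separate argument that $\mathrm{Op}(h)$ is bounded when $h$ is Schwartz — this follows because $\mathrm{Op}(h)$ can be written (as in the proof of Lemma \ref{fourier multiplier lemma}, or via the spatial isomorphism of Theorem \ref{spatial}) and $h\in L_1(\mathbb{R}^d)$ forces $\|\mathrm{Op}(h)\|_\infty\leq(2\pi)^{-d/4}\|h\|_1$ since $\|U(s)\|=1$. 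So $\partial^\alpha\mathrm{Op}(f)=\mathrm{Op}(s^\alpha f)$ is bounded, and it belongs to $L_1(\mathbb{R}^d_\theta)$: here I would need that $\mathrm{Op}(h)\in L_1(\mathbb{R}^d_\theta)$ for Schwartz $h$. This is the point requiring the most care — it does not follow from the $L_2$ or $L_\infty$ bounds alone. The natural route is the spatial isomorphism $r$ of Theorem \ref{spatial}: under $r$, $\mathrm{Op}(h)$ becomes an integral operator on $L_2(\mathbb{R}^{d/2})$ with a Schwartz kernel, and such operators are trace class, so $\tau_\theta(|\mathrm{Op}(h)|)<\infty$. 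Granting this, $\mathrm{Op}(f)\in W^{d,1}(\mathbb{R}^d_\theta)$.

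For part (\ref{densb}), density of $\{\mathrm{Op}(f):f\text{ Schwartz}\}$ in $L_1(\mathbb{R}^d_\theta)$: I would first reduce to approximating elements of $L_1\cap L_2$, which is dense in $L_1$. Given $x\in L_1(\mathbb{R}^d_\theta)\cap L_2(\mathbb{R}^d_\theta)$, write $x=\mathrm{Op}(g)$ with $g\in L_2(\mathbb{R}^d)$ by Lemma \ref{nc_plane_L_2_description}. Choose Schwartz functions $f_n\to g$ in $L_2(\mathbb{R}^d)$; then $\mathrm{Op}(f_n)\to x$ in $L_2(\mathbb{R}^d_\theta)$, but I need convergence in $L_1$, which is stronger. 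The standard device is to use the Fourier-multiplier operators $T_\phi$ of Lemma \ref{fourier multiplier lemma}: pick a Schwartz $\phi$ with $\phi(0)=1$ and set $\phi_\varepsilon(s)=\phi(\varepsilon s)$; then $T_{\phi_\varepsilon}x\to x$ in $L_1(\mathbb{R}^d_\theta)$ as $\varepsilon\to 0$ (by Lemma \ref{fourier multiplier lemma} the $T_{\phi_\varepsilon}$ are uniformly bounded on $L_1$ since $\|\mathcal{F}\phi_\varepsilon\|_1=\|\mathcal{F}\phi\|_1$, and they converge to the identity on the dense subset of $\mathrm{Op}$'s of compactly supported smooth functions — or one checks convergence directly via the integral representation). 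Simultaneously, $T_{\phi_\varepsilon}$ applied to a Schwartz-$\mathrm{Op}$ is again a Schwartz-$\mathrm{Op}$, since multiplication by $\phi_\varepsilon$ preserves the Schwartz class. Combining: given $x\in L_1$, approximate in $L_1$ by $T_{\phi_\varepsilon}x$; these lie in $T_{\phi_\varepsilon}(L_1)$, and approximating $x$ itself in $L_2$ by Schwartz-$\mathrm{Op}$'s $\mathrm{Op}(f_n)$, one shows $T_{\phi_\varepsilon}\mathrm{Op}(f_n)\to T_{\phi_\varepsilon}x$ in $L_1$ as well (cutting off in Fourier space turns $L_2$-convergence into $L_1$-convergence because the multiplier has a fixed integrable Fourier transform). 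A cleaner formulation: show directly that for $g\in L_2(\mathbb{R}^d)$, the function $(\mathcal{F}\phi_\varepsilon)*\,(\text{something})$ makes $T_{\phi_\varepsilon}\mathrm{Op}(g)=\mathrm{Op}(\phi_\varepsilon g)$ and that $\phi_\varepsilon g\in L_1(\mathbb{R}^d)$ can be approximated in an appropriate sense; the upshot is $\mathrm{Op}(\phi_\varepsilon g)\in L_1(\mathbb{R}^d_\theta)$ can be approximated by $\mathrm{Op}$ of Schwartz functions in $L_1$-norm, and $\mathrm{Op}(\phi_\varepsilon g)\to\mathrm{Op}(g)=x$ in $L_1$.

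The main obstacle I anticipate is the transition from $L_2$-approximation to $L_1$-approximation — the $L_2$ density of Schwartz functions is immediate, but promoting it to $L_1(\mathbb{R}^d_\theta)$-density requires either the Fourier-multiplier smoothing argument above or an explicit use of the spatial isomorphism to reduce to a classical statement about trace-class operators with smooth kernels on $L_2(\mathbb{R}^{d/2})$. A secondary technical point is verifying that $\mathrm{Op}(h)\in L_1(\mathbb{R}^d_\theta)$ for Schwartz $h$ in the first place (needed already in part (\ref{densa})); this again is most transparent via Theorem \ref{spatial}, where it becomes the fact that an operator with Schwartz integral kernel is trace class. Everything else — the commutator computation giving $\partial^\alpha\mathrm{Op}(f)=\mathrm{Op}(s^\alpha f)$, the boundedness from $\|U(s)\|=1$, and the reduction to $L_1\cap L_2$ — is routine.
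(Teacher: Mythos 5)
Your part (a) is essentially correct but takes a different route from the paper for the key step, namely that $\mathrm{Op}(h)\in L_1(\mathbb{R}^d_\theta)$ when $h$ is Schwartz. You propose the spatial isomorphism of Theorem \ref{spatial} together with the classical fact that an operator with Schwartz integral kernel on $L_2(\mathbb{R}^{d/2})$ is trace class; this is valid. The paper instead invokes a matrix-unit system $\{e_{kl}\}\subset L_\infty(\mathbb{R}^d_\theta)$ (Lemma~2.4 of \cite{gayral-moyal}) with $e_{kl}=\mathrm{Op}(f_{kl})$ for Schwartz $f_{kl}$, $\tau_\theta(e_{kk})=1$, $\sum_k e_{kk}=1$ strongly, together with an absolutely convergent expansion $f=\sum_{k,l}c_{kl}f_{kl}$, $\sum|c_{kl}|<\infty$, of any Schwartz $f$ (Proposition~2.5 of the same reference); then $\mathrm{Op}(f)=\sum c_{kl}e_{kl}$ converges in $L_1$-norm because $\|e_{kl}\|_1=1$. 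The commutator identity $\partial^\alpha\mathrm{Op}(f)=\mathrm{Op}(s\mapsto s^\alpha f(s))$ is the same in both accounts.

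Part (b) has a genuine gap as written. The bound in Lemma \ref{fourier multiplier lemma}, $\|T_\phi y\|_1\leq\|\mathcal{F}\phi\|_1\|y\|_1$, shows only that $T_\phi$ is bounded on $L_1$; it gives nothing from $L_2$ to $L_1$. Your claim that ``cutting off in Fourier space turns $L_2$-convergence into $L_1$-convergence'' is not correct: even for $\phi$ compactly supported, $T_\phi$ does not map $L_2(\mathbb{R}^d_\theta)$ into $L_1(\mathbb{R}^d_\theta)$. Under the spatial isomorphism, $\mathrm{Op}(\phi g)$ for a compactly supported $g\in L_2(\mathbb{R}^d)$ corresponds to a Hilbert--Schmidt operator whose integral kernel on $L_2(\mathbb{R}^{d/2})$ does \emph{not} inherit compact support or decay, and such an operator is in general not trace class. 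So $T_{\phi_\varepsilon}\mathrm{Op}(f_n)\to T_{\phi_\varepsilon}x$ in $L_1$ does not follow from $\mathrm{Op}(f_n)\to x$ in $L_2$, and your argument does not produce an $L_1$-approximant lying in $\{\mathrm{Op}(f):f\text{ Schwartz}\}$. (Your fallback --- reduce via the spatial isomorphism to density in $\mathcal{L}_1(L_2(\mathbb{R}^{d/2}))$ of operators with Schwartz kernels --- could be made to work, but you left it entirely unelaborated.) The paper sidesteps the whole issue by reusing the matrix-unit system: with $p_N=\sum_{k\leq N}e_{kk}$, one has $p_Nxp_N\to x$ in $L_1$-norm for every $x\in L_1(\mathbb{R}^d_\theta)$ since $p_N\uparrow 1$ strongly, and $p_Nxp_N=\sum_{k,l\leq N}e_{kk}xe_{ll}$ is a finite linear combination of the $e_{kl}=\mathrm{Op}(f_{kl})$, hence equal to $\mathrm{Op}$ of a Schwartz function. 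This is the ingredient your proposal is missing.
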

\begin{proof} There exists a sequence $\{e_{kl}\}_{k,l\geq0}\subset L_{\infty}(\mathbb{R}^d_{\theta})$ such that
\begin{enumerate}[{\rm (i)}]
\item $e_{k_1l_1}e_{k_2l_2}=\delta_{l_1,k_2}e_{k_1l_2}$ and $e_{kl}^*=e_{lk}.$
\item $\tau_{\theta}(e_{kk})=1.$
\item $\sum_{k\geq0}e_{kk}=1$ in strong operator topology.
\item for every $k,l\geq0,$ there exists a Schwartz function $f_{kl}$ such that $e_{kl}={\rm Op}(f_{kl}).$
\end{enumerate}
The existence of such a sequence is established in Lemma 2.4 in \cite{gayral-moyal} (see also additional references therein). A particular formula for $f_{kl}$ can be found on p.~618 in \cite{gayral-moyal} in terms of Laguerre polynomials.

We prove \eqref{densa}. Let $f$ be a Schwartz function. By Proposition 2.5 in \cite{gayral-moyal}, one can write $f$ as 
$$f=\sum_{k,l\geq0}c_{kl}f_{kl},\quad \sum_{k,l\geq0}|c_{kl}|<\infty.$$
Thus,
$${\rm Op}(f)=\sum_{k,l\geq0}c_{kl}e_{kl},$$
where the series converges in $L_1-$norm. Thus, ${\rm Op}(f)\in L_1(\mathbb{R}^d_{\theta}).$ Let $f_{\alpha}(t)=t^{\alpha}f(t),$ $t\in\mathbb{R}^d.$ By \eqref{dk action}, $\partial^{\alpha}({\rm Op}(f))={\rm Op}(f_{\alpha}).$ Since $f_{\alpha}$ is also a Schwartz function, it follows that $\partial^{\alpha}({\rm Op}(f))\in L_1(\mathbb{R}^d_{\theta}).$ This proves \eqref{densa}.

To prove \eqref{densb}, note that, for every $x\in L_1(\mathbb{R}^d_{\theta}),$
$$\sum_{k,l\leq N}e_{kk}xe_{ll}=(\sum_{k\leq N}e_{kk})x(\sum_{l\leq N}e_{ll})\to x$$
in $\mathcal{L}_1-$norm as $N\to\infty.$ Note that $e_{kk}xe_{ll}$ is a scalar multiple of $e_{kl}={\rm Op}(f_{kl}).$ Since a linear combination of Schwartz functions is again a Schwartz function, it follows that
$$\sum_{k,l\leq N}e_{kk}xe_{ll}\in \{{\rm Op}(f):\ f\mbox{ is Schwartz}\}\subset W^{d,1}(\mathbb{R}^d_{\theta}).$$
This proves \eqref{densb}.
\end{proof}

\begin{lem}\label{main invariance lemma} If $F$ is a continuous functional on $W^{d,1}(\mathbb{R}^d_{\theta})$ such that
$$F(x)=F(U(-t)xU(t)),\quad x\in W^{d,1}(\mathbb{R}^d_{\theta}),\quad t\in\mathbb{R}^d,$$
then $F=\tau_{\theta}$ (up to a constant factor).
\end{lem}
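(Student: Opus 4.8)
The plan is to reduce the statement to the uniqueness of the trace on $\mathcal{L}(L_2(\mathbb{R}^{d/2}))$ via the averaging/Fourier-multiplier machinery already set up. First I would observe that it suffices to show $F$ vanishes on $[L_\infty(\mathbb{R}^d_\theta),L_\infty(\mathbb{R}^d_\theta)]\cap W^{d,1}(\mathbb{R}^d_\theta)$ and is normal in an appropriate weak sense; but the cleanest route is to use the $*$-isomorphism $r:(L_\infty(\mathbb{R}^d_\theta),\tau_\theta)\to(\mathcal{L}(L_2(\mathbb{R}^{d/2})),{\rm Tr})$ from Theorem~\ref{spatial}, under which $W^{d,1}(\mathbb{R}^d_\theta)$ maps into $\mathcal{L}_1(L_2(\mathbb{R}^{d/2}))$ by Theorem~\ref{ncplane cwikel}\eqref{cwika} (taking $x(1-\Delta)^{-\frac{d+1}{2}}$, or more directly using Lemma~\ref{density lemma} together with the fact that $W^{d,1}\subset L_1(\mathbb{R}^d_\theta)$). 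The conjugations $x\mapsto U(-t)xU(t)$ are unitary conjugations in $L_\infty(\mathbb{R}^d_\theta)$, hence conjugations by a strongly continuous unitary family $V(t)=r(U(t))$ on $L_2(\mathbb{R}^{d/2})$; thus $F\circ r^{-1}$ is a continuous functional on a dense subspace of $\mathcal{L}_1$ that is invariant under conjugation by all $V(t)$.

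The key step is then: \emph{a functional on $\mathcal{L}_1$ invariant under conjugation by the family $\{V(t)\}$ must be a scalar multiple of ${\rm Tr}$}. To get this I would average. For a Schwartz function $\phi$ on $\mathbb{R}^d$, Lemma~\ref{fourier multiplier lemma} and its proof show that $T_\phi x=\int_{\mathbb{R}^d}(\mathcal{F}\phi)(u)\,U(-\theta^{-1}u)xU(\theta^{-1}u)\,du$ is a bona fide Bochner integral in $L_1(\mathbb{R}^d_\theta)$ (convergence in $\mathcal{L}_1$-norm, using the continuity from Lemma~\ref{continuity lemma}), so by invariance and continuity of $F$,
$$F(T_\phi x)=\Big(\int_{\mathbb{R}^d}(\mathcal{F}\phi)(u)\,du\Big)F(x)=(2\pi)^{d/2}\phi(0)\,F(x).$$
Wait — I should be careful about normalisation of $\mathcal{F}$; the point is simply that $F(T_\phi x)=\mathrm{const}(\phi)\cdot F(x)$ with the constant depending only on $\phi(0)$ (or $\int \mathcal{F}\phi$). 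Now on generators $T_\phi({\rm Op}(f))={\rm Op}(\phi f)$, so choosing $\phi$ supported near a point $s_0\ne 0$ with $\phi(0)=0$ kills $F$ on ${\rm Op}(f)$ for $f$ supported away from the origin; by the density Lemma~\ref{density lemma} and continuity this shows $F$ is ``concentrated at the origin in Fourier variable,'' i.e. $F$ is determined by a single quantity, which forces $F=c\,\tau_\theta$ on the dense subspace $\{{\rm Op}(f):f\ \mathrm{Schwartz}\}$, hence everywhere by continuity.

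The main obstacle I anticipate is the bookkeeping that turns ``$F$ is a conjugation-invariant continuous functional on a dense subalgebra of $\mathcal{L}_1$'' into ``$F$ is proportional to ${\rm Tr}$'': the subtlety is that $W^{d,1}(\mathbb{R}^d_\theta)$ is \emph{not} all of $L_1(\mathbb{R}^d_\theta)$ and carries a stronger norm, so one must either (i) directly exhibit, for any two Schwartz $f,g$ with $\tau_\theta({\rm Op}(f))=\tau_\theta({\rm Op}(g))$, a path of $U(t)$-conjugations and $T_\phi$-averages connecting them inside $W^{d,1}$, or (ii) work through the matrix units $e_{kl}$ of Lemma~\ref{density lemma}: the averaging shows $F(e_{kl})=0$ for $k\ne l$ (since $e_{kl}$ lies in a commutator, or since $T_\phi$ can separate its Fourier support from the origin), and that $F(e_{kk})$ is independent of $k$ (conjugate $e_{00}$ to $e_{kk}$ by a unitary built from the $U(t)$'s, noting such unitaries lie in $L_\infty(\mathbb{R}^d_\theta)$ and the conjugation is a norm-limit of the $U(-t)\cdot U(t)$ actions), giving $F=c\,\tau_\theta$ on finite linear combinations of the $e_{kl}$, which are dense in $W^{d,1}$ in its own norm by the argument in Lemma~\ref{density lemma}\eqref{densb}. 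Route (ii) is the safer one; the delicate point is verifying that the conjugation implementing $e_{00}\mapsto e_{kk}$ is genuinely in the closed span of the $x\mapsto U(-t)xU(t)$ actions (equivalently, that the unitary group generated by $\{U(t)\}$ acts transitively enough on the minimal projections), for which I would invoke the spatial isomorphism of Theorem~\ref{spatial} and the fact that ${\rm Tr}$ is the unique (up to scaling) normal trace on $\mathcal{L}(L_2(\mathbb{R}^{d/2}))$.
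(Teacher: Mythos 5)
Your opening move is exactly the paper's: average the conjugations $x\mapsto U(-\theta^{-1}u)xU(\theta^{-1}u)$ against $\mathcal{F}\phi$ to get $F(T_\phi x)=\mathrm{const}(\phi)\,F(x)$ (with the caveat that the Bochner integral must converge in $W^{d,1}$-norm, not merely in $\|\cdot\|_1$, since $F$ is only assumed continuous on $W^{d,1}$; Lemma~\ref{continuity lemma} does supply this). But from that point on, both of your proposed routes to the conclusion have genuine gaps. In route (i), knowing that $F({\rm Op}(f))=0$ for $f$ supported away from the origin does not yet give $F({\rm Op}(f))=c\,f(0)$ for all Schwartz $f$: you would need to approximate a general Schwartz $f$ with $f(0)=0$ by functions supported away from $0$ \emph{in the $W^{d,1}(\mathbb{R}^d_\theta)$-norm}, i.e.\ control $\|{\rm Op}(t^\alpha\psi_\epsilon(t)f(t))\|_{L_1(\mathbb{R}^d_\theta)}$ as the cutoff shrinks, and no tool in the paper (nor any standard soft argument) gives that; the $L_1(\mathbb{R}^d_\theta)$-norm of ${\rm Op}(g)$ is a trace-norm and is not controlled by pointwise information about $g$. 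In route (ii), the transitivity you need fails: the hypothesis gives invariance only under conjugation by the Weyl unitaries $U(t)$, and $U(-t)e_{00}U(t)$ is never $e_{kk}$ for $k\neq 0$, so the conjugation carrying $e_{00}$ to $e_{kk}$ is not among (nor a limit of averages of) the allowed ones. Invoking uniqueness of the normal trace on $\mathcal{L}(L_2(\mathbb{R}^{d/2}))$ is circular at this stage, because that uniqueness applies to functionals invariant under \emph{all} unitary conjugations, which is precisely what has not been established.

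The idea you are missing is a smoothing estimate. Take $\phi$ to be the Gaussian $\phi(t)=e^{-\frac12|t|^2}$, so that $\mathrm{const}(\phi)\neq 0$ and $F=(2\pi)^{-d/2}F\circ T_\phi$. Since $\partial^\alpha\circ T_\phi=T_{\phi_\alpha}$ with $\phi_\alpha(s)=s^\alpha e^{-\frac12|s|^2}$, and each $T_{\phi_\alpha}$ is bounded on $L_1(\mathbb{R}^d_\theta)$ by Lemma~\ref{fourier multiplier lemma}, one gets $\|T_\phi x\|_{W^{d,1}}\leq c_d\|x\|_1$. Hence $|F(x)|\leq c_d\|F\|_{(W^{d,1})^*}\|x\|_1$, so $F$ extends by Hahn--Banach to $L_1(\mathbb{R}^d_\theta)$ and is represented as $F(x)=\tau_\theta(xy)$ for some $y\in L_\infty(\mathbb{R}^d_\theta)$. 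The invariance hypothesis then forces $y=U(t)yU(-t)$ for all $t$, so $y$ is central, and factoriality (Theorem~\ref{spatial}) makes $y$ scalar. This duality step is what replaces both of your problematic endgames.
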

\begin{proof} Let $T:W^{d,1}(\mathbb{R}^d_{\theta})\to W^{d,1}(\mathbb{R}^d_{\theta})$ be defined by setting
$$Tx=\int_{\mathbb{R}^d}U(-\theta^{-1}t)xU(\theta^{-1}t)e^{-\frac12|t|^2}dt.$$
The integral is understood as a Bochner integral of a continuous $W^{d,1}(\mathbb{R}^d_{\theta})-$valued function (the continuity and convergence of the integral follow from Lemma \ref{continuity lemma}).

For every $x\in W^{d,1}(\mathbb{R}^d_{\theta}),$ we have
$$F(Tx)=\int_{\mathbb{R}^d}F(U(-\theta^{-1}t)xU(\theta^{-1}t))e^{-\frac12|t|^2}dt=\int_{\mathbb{R}^d}F(x)e^{-\frac12|t|^2}dt=(2\pi)^{\frac{d}{2}}F(x).$$
Thus,
$$F(x)=(2\pi)^{-\frac{d}{2}}F(Tx),\quad x\in W^{d,1}(\mathbb{R}^d_{\theta}).$$

We claim that $\|Tx\|_{W^{d,1}}\leq c_d\|x\|_1$ for every $x\in W^{d,1}(\mathbb{R}^d_{\theta}).$ To see this, let
$$x=\int_{\mathbb{R}^d}f(s)U(s)ds,\quad f\in L_2(\mathbb{R}^d).$$
If, in the proof of Lemma \ref{fourier multiplier lemma}, we select $\phi(t)=e^{-\frac12|t|^2},$ $t\in\mathbb{R}^d,$ then the argument given there yields
$$Tx=\int_{\mathbb{R}^d}f(s)U(s)e^{-\frac12|s|^2}ds.$$
By \eqref{dk action}, we have
$$\partial^{\alpha}(Tx)=\int_{\mathbb{R}^d}f(s)U(s)s^{\alpha}e^{-\frac12|s|^2}ds.$$
Let $\phi_{\alpha}(s)=s^{\alpha}e^{-\frac12|s|^2},$ $s\in\mathbb{R}^d.$ We have that $\partial^{\alpha}\circ T=T_{\phi_{\alpha}}.$ By Lemma \ref{fourier multiplier lemma}, $T_{\phi_{\alpha}}:L_1(\mathbb{R}^d_{\theta})\to L_1(\mathbb{R}^d_{\theta})$ is a bounded operator. This proves the claim.

For every $x\in W^{d,1}(\mathbb{R}^d_{\theta}),$ we have
$$|F(x)|=(2\pi)^{-\frac{d}{2}}|F(Tx)|\leq(2\pi)^{-\frac{d}{2}}\|F\|_{(W^{d,1})^*}\|Tx\|_{W^{d,1}}\leq c_d\|F\|_{(W^{d,1})^*}\|x\|_1.$$
Thus, a functional $F$ on $W^{d,1}(\mathbb{R}^d_{\theta})$ is bounded in $\|\cdot\|_1-$norm. By the Hahn-Banach Theorem, $F$ extends to a bounded functional on $L_1(\mathbb{R}^d_{\theta}).$ Hence, there exists $y\in L_{\infty}(\mathbb{R}^d_{\theta})$ such that
$$F(x)=\tau_{\theta}(xy),\quad x\in W^{d,1}(\mathbb{R}^d_{\theta}).$$
Clearly,
$$F(U(-t)xU(t))=\tau_{\theta}(U(-t)xU(t)y)=\tau_{\theta}(xU(t)yU(-t)).$$
Comparing the last 2 equalities, we obtain
$$\tau_{\theta}(xU(t)yU(-t))=\tau_{\theta}(xy),\quad x\in W^{d,1}(\mathbb{R}^d_{\theta}).$$
Since $W^{d,1}(\mathbb{R}^d_{\theta})$ is dense in $L_1(\mathbb{R}^d_{\theta}),$ it follows that $y=U(t)yU(-t)$ for every $t\in\mathbb{R}^d.$ In other words, $y$ commutes with every $U(t)$ and, therefore, with every element in $L_{\infty}(\mathbb{R}^d_{\theta}).$ Since $L_{\infty}(\mathbb{R}^d_{\theta})$ is a factor (see Theorem \ref{spatial}), it follows that $y$ is a scalar operator. This completes the proof.
\end{proof}

The following proposition is a light version of Theorem \ref{cif ncplane}.

\begin{prop}\label{proportionality prop} If $x\in W^{d,1}(\mathbb{R}^d_{\theta}),$ then $x(1-\Delta)^{-\frac{d}{2}}\in\mathcal{L}_{1,\infty}$ and
$$\varphi(x(1-\Delta)^{-\frac{d}{2}})=c_{\varphi}\tau_{\theta}(x)$$
for every continuous trace on $\mathcal{L}_{1,\infty}$ and for some constant $c_{\varphi}.$ 
\end{prop}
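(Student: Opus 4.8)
The plan is to verify that the linear functional $F$ on $W^{d,1}(\mathbb{R}^d_{\theta})$ defined by $F(x)=\varphi\big(x(1-\Delta)^{-\frac{d}{2}}\big)$ satisfies the hypotheses of Lemma \ref{main invariance lemma}, and then to read off the conclusion from that lemma. Well-definedness and continuity of $F$ are immediate from Theorem \ref{ncplane cwikel}: part \eqref{cwikb} gives $x(1-\Delta)^{-\frac{d}{2}}\in\mathcal{L}_{1,\infty}$ — which is already the first assertion of the proposition — together with the bound $\|x(1-\Delta)^{-\frac{d}{2}}\|_{1,\infty}\leq c_d\|x\|_{W^{d,1}}$, and since a continuous trace is bounded in the quasi-norm of $\mathcal{L}_{1,\infty}$ we get $|F(x)|\leq c_d\|\varphi\|\,\|x\|_{W^{d,1}}$. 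That $U(-t)xU(t)$ again lies in $W^{d,1}(\mathbb{R}^d_{\theta})$, so that $F(U(-t)xU(t))$ makes sense, is Lemma \ref{continuity lemma}. It therefore only remains to establish the invariance $F(U(-t)xU(t))=F(x)$ for every $t\in\mathbb{R}^d$.

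Since $\varphi$ is a trace and $U(t)$ is unitary, the cyclic property of $\varphi$ yields
\[
F(U(-t)xU(t))=\varphi\big(U(-t)\cdot xU(t)(1-\Delta)^{-\frac{d}{2}}\big)=\varphi\big(x\cdot U(t)(1-\Delta)^{-\frac{d}{2}}U(-t)\big),
\]
so the invariance amounts to the statement that $\varphi$ annihilates $x\big(U(t)(1-\Delta)^{-\frac{d}{2}}U(-t)-(1-\Delta)^{-\frac{d}{2}}\big)$. The decisive observation is that the bracketed operator gains a factor of $(1-\Delta)^{-\frac12}$. Indeed, using the explicit realisation \eqref{nc_plane_realisation} of $U(t)$ and the commutation relation \eqref{dk action} — conjugation by $U(t)$ translates $\nabla$ — one identifies $U(t)(1-\Delta)^{-\frac{d}{2}}U(-t)$ as the operator obtained from $(1-\Delta)^{-\frac{d}{2}}$ by a translation of its symbol in the relevant functional calculus, and a mean value estimate bounds the difference of the two symbols by $O\big((1+|\cdot|^2)^{-\frac{d+1}{2}}\big)$. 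Hence
\[
U(t)(1-\Delta)^{-\frac{d}{2}}U(-t)-(1-\Delta)^{-\frac{d}{2}}=(1-\Delta)^{-\frac{d+1}{2}}B_t
\]
for some bounded operator $B_t$ (depending on $t$).

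Granting this factorisation, $x\big(U(t)(1-\Delta)^{-\frac{d}{2}}U(-t)-(1-\Delta)^{-\frac{d}{2}}\big)=\big(x(1-\Delta)^{-\frac{d+1}{2}}\big)B_t$ lies in $\mathcal{L}_1$, by Theorem \ref{ncplane cwikel}\eqref{cwika} and the fact that $\mathcal{L}_1$ is an ideal. Every trace on $\mathcal{L}_{1,\infty}$ vanishes on $\mathcal{L}_1$, because $\mathcal{L}_1\subseteq[\mathcal{L}_{1,\infty},\mathcal{L}(H)]$ (see \cite{LSZ}); in particular $\varphi$ does, which is exactly the required invariance. Lemma \ref{main invariance lemma} then provides a constant $c_\varphi$ with $F=c_\varphi\tau_{\theta}$, that is, $\varphi(x(1-\Delta)^{-\frac{d}{2}})=c_\varphi\tau_{\theta}(x)$ for all $x\in W^{d,1}(\mathbb{R}^d_{\theta})$.

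The one genuinely non-routine point is expected to be the factorisation in the second step: one has to compute $U(t)(1-\Delta)^{-\frac{d}{2}}U(-t)$ explicitly from the data \eqref{nc_plane_realisation}--\eqref{dk action} and to check that the remaining factor $B_t=(1-\Delta)^{\frac{d+1}{2}}\big(U(t)(1-\Delta)^{-\frac{d}{2}}U(-t)-(1-\Delta)^{-\frac{d}{2}}\big)$ is indeed a bounded operator. Everything else is bookkeeping with the cyclic property of $\varphi$, the two Cwikel estimates of Theorem \ref{ncplane cwikel}, and the standard fact that traces kill $\mathcal{L}_1$.
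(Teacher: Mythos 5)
Your proof is correct, and the key step is handled by a genuinely different mechanism than the paper's. Both arguments reduce the proposition to Lemma \ref{main invariance lemma} via the functional $F(x)=\varphi(x(1-\Delta)^{-\frac{d}{2}})$, bounded on $W^{d,1}(\mathbb{R}^d_{\theta})$ by Theorem \ref{ncplane cwikel}\eqref{cwikb}; the difference is in how the invariance $F(U(-t)xU(t))=F(x)$ is obtained. The paper conjugates by the unitary $e^{i\langle t,\nabla\rangle}$, which commutes \emph{exactly} with $(1-\Delta)^{-\frac{d}{2}}$ by the Spectral Theorem, and then uses \eqref{exp nabla action} together with \eqref{nc_plane_comm_relation} to identify $e^{i\langle t,\nabla\rangle}xe^{-i\langle t,\nabla\rangle}=U(-\theta^{-1}t)xU(\theta^{-1}t)$; the invariance comes for free, with no error term, at the cost of using the non-degeneracy of $\theta$ to see that $\theta^{-1}t$ sweeps out all of $\mathbb{R}^d$. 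You instead conjugate by $U(t)$ itself and absorb the failure of commutation into an $\mathcal{L}_1$ perturbation: since $U(t)\nabla U(-t)=\nabla-t$, the operator $U(t)(1-\Delta)^{-\frac{d}{2}}U(-t)-(1-\Delta)^{-\frac{d}{2}}$ is multiplication by $(1+|u-t|^2)^{-\frac{d}{2}}-(1+|u|^2)^{-\frac{d}{2}}=O((1+|u|^2)^{-\frac{d+1}{2}})$ (mean value theorem plus Peetre's inequality), so the discrepancy factors through $x(1-\Delta)^{-\frac{d+1}{2}}\in\mathcal{L}_1$ by Theorem \ref{ncplane cwikel}\eqref{cwika}, where every trace on $\mathcal{L}_{1,\infty}$ vanishes because $\mathcal{L}_1\subseteq[\mathcal{L}_{1,\infty},\mathcal{L}(H)]$. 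Your route costs the stronger Cwikel estimate \eqref{cwika} and the symbol estimate, but it is more robust: it would survive replacing $(1-\Delta)^{-\frac{d}{2}}$ by any symbol whose translates differ by an integrable-order correction, and it is exactly the perturbation device the paper itself uses later when trading $(1-\Delta)^{-\frac{d}{2}}$ for $h(\nabla)$ in the proof of Theorem \ref{cif ncplane}. The only point worth writing out in full is the boundedness of $B_t$, which is the elementary symbol estimate just described.
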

\begin{proof} By Theorem \ref{ncplane cwikel} \eqref{cwikb}, the functional
$$F:x\to \varphi(x(1-\Delta)^{-\frac{d}{2}}),\quad x\in W^{d,1}(\mathbb{R}^d_{\theta}),$$
is a well defined bounded linear functional on $W^{d,1}(\mathbb{R}^d_{\theta}).$

Since $\varphi$ is unitarily invariant, it follows that
$$\varphi(x(1-\Delta)^{-\frac{d}{2}})=\varphi(e^{i\langle t,\nabla\rangle}x(1-\Delta)^{-\frac{d}{2}}e^{-i\langle t,\nabla\rangle}),\quad t\in\mathbb{R}^d.$$
By the Spectral Theorem, we have
$$(1-\Delta)^{-\frac{d}{2}}e^{-i\langle t,\nabla\rangle}=e^{-i\langle t,\nabla\rangle}(1-\Delta)^{-\frac{d}{2}},$$
and so
$$\varphi(x(1-\Delta)^{-\frac{d}{2}})=\varphi(e^{i\langle t,\nabla\rangle}xe^{-i\langle t,\nabla\rangle}(1-\Delta)^{-\frac{d}{2}}).$$

For every $s\in\mathbb{R}^d,$ we have (see \eqref{exp nabla action})
$$e^{i\langle t,\nabla\rangle}U(s)e^{-i\langle t,\nabla\rangle}=e^{i\langle t,s\rangle}U(s).$$
On the other hand, it follows from \eqref{nc_plane_comm_relation} that
$$U(-\theta^{-1}t)U(s)U(\theta^{-1}t)=e^{i\langle t,s\rangle}U(s).$$
Comparing preceding equalities, we arrive at
$$e^{i\langle t,\nabla\rangle}U(s)e^{-i\langle t,\nabla\rangle}=U(-\theta^{-1}t)U(s)U(\theta^{-1}t).$$
It follows that
$$e^{i\langle t,\nabla\rangle}xe^{-i\langle t,\nabla\rangle}=U(-\theta^{-1}t)xU(\theta^{-1}t),\quad x\in L_{\infty}(\mathbb{R}^d_{\theta}).$$

Combining the preceding paragraphs, we obtain
$$\varphi(x(1-\Delta)^{-\frac{d}{2}})=\varphi(U(-\theta^{-1}t)xU(\theta^{-1}t)(1-\Delta)^{-\frac{d}{2}}).$$
Applying Lemma \ref{main invariance lemma} to our functional $F,$ we conclude the argument.
\end{proof}

\section{Proof of measurability}\label{measurability section}

\begin{lem}\label{l1 estimate} If $K\in W^{2d+2,1}([0,1]^d\times[0,1]^d)$ and if $T:L_2((0,1)^d)\to L_2((0,1)^d)$ is an integral operator with integral kernel $K,$ then $T\in\mathcal{L}_1$ and $\|T\|_1\leq c_d\|K\|_{W^{2d+2,1}}.$
\end{lem}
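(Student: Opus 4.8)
The plan is to prove that $T$ is trace class by exhibiting it as a product of two Hilbert--Schmidt operators, each controlled by a fractional power of $(1-\Delta)$ on the torus, and then to estimate each factor by the Sobolev norm of a suitable partial derivative of $K$. The key point is that $T$ factors through the kernel Hilbert--Schmidt decomposition: writing $K$ as an integral against $(1-\Delta_v)^{d/2+1/2}$ acting on the second variable (roughly, $T = \big[\,T\cdot(1-\Delta)^{-(d+1)/2}\,\big]\cdot(1-\Delta)^{(d+1)/2}$ in a smoothed sense), one expresses $T$ as $T_1 (1-\Delta)^{-(d+1)/2} T_2$ type product, but more directly I would use the classical fact that on $L_2((0,1)^d)$ one has $(1-\Delta)^{-s}\in\mathcal{L}_2$ precisely when $2s>d$, with Hilbert--Schmidt norm bounded by a dimensional constant. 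Here $\Delta$ denotes the Laplacian on the cube with, say, periodic boundary conditions, so that $\{e^{2\pi i\langle n,t\rangle}\}_{n\in\mathbb{Z}^d}$ is an eigenbasis and $\mu(n,(1-\Delta)^{-s})=(1+4\pi^2|n|^2)^{-s}$.

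First I would decompose $T = A B$ where $B = (1-\Delta_u)^{-(d+1)/2}$ acting in the first variable (an element of $\mathcal{L}_2$ since $d+1>d$) and $A$ is the integral operator with kernel $K_1(t,u)=\big((1-\Delta_u)^{(d+1)/2}K(t,\cdot)\big)(u)$, which is again Hilbert--Schmidt with $\|A\|_2^2 = \int\int |K_1(t,u)|^2\,dt\,du$. Then $\|T\|_1\leq \|A\|_2\|B\|_2\leq c_d\|K_1\|_{L_2}$. The remaining task is to bound $\|K_1\|_{L_2}$ — equivalently the $H^{d+1}$-norm of $K$ in the second variable — by $\|K\|_{W^{2d+2,1}}$. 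For this I would invoke the one-dimensional-in-spirit Sobolev embedding: on the torus, $\|g\|_{L_2}\leq c\|g\|_{W^{N,1}}$ once $N$ exceeds the critical index, applied to $g = \partial_u^\beta K(t,\cdot)$ for each multiindex $\beta$ with $|\beta|\leq d+1$ and then integrated/supremized appropriately in $t$; the extra $d+1$ derivatives in $t$ are there precisely to convert the $L_1$-in-$t$ control of $K$ and its $t$-derivatives into an $L_2$-in-$t$ (indeed $L_\infty$-in-$t$, via Sobolev embedding $W^{d+1,1}\hookrightarrow L_\infty$ on the cube) control of $\int |K_1(t,u)|^2\,du$. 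Counting derivatives: $d+1$ on the $u$-side for Hilbert--Schmidt-ness of the second factor is doubled to account for the $L_1\to L_2$ conversion there as well, giving $2(d+1)=2d+2$ total, which matches the hypothesis $W^{2d+2,1}$.

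More carefully, the clean route is: bound $\big(\int_{[0,1]^d}|K_1(t,u)|^2\,du\big)^{1/2}$ pointwise in $t$ using Sobolev embedding $W^{d+1,1}([0,1]^d)\hookrightarrow L_2([0,1]^d)$ in the $u$-variable, obtaining $\sum_{|\beta|\leq d+1}\|\partial_u^\beta K(t,\cdot)\|_{L_1([0,1]^d_u)}$; then integrate the square of this in $t$ and use, for each fixed $\beta$, the embedding $W^{d+1,1}([0,1]^d)\hookrightarrow L_2([0,1]^d)$ in the $t$-variable applied to $t\mapsto \|\partial_u^\beta K(t,\cdot)\|_{L_1}$ (whose $t$-derivatives are dominated by $\|\partial_t^\gamma\partial_u^\beta K(t,\cdot)\|_{L_1}$ by differentiating under the integral). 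Summing over $\beta$ and $\gamma$ with $|\beta|,|\gamma|\leq d+1$ yields $\|K_1\|_{L_2([0,1]^{2d})}\leq c_d\sum_{|\alpha|\leq 2d+2}\|\partial^\alpha K\|_{L_1([0,1]^{2d})}=c_d\|K\|_{W^{2d+2,1}}$, completing the estimate.

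I expect the main obstacle to be the bookkeeping around boundary conditions and the precise form of the Sobolev embeddings on the cube: the Laplacian $\Delta$ on $(0,1)^d$ needs a fixed self-adjoint realization (periodic is cleanest so that the Fourier basis diagonalizes it and $(1-\Delta)^{-(d+1)/2}\in\mathcal{L}_2$ is immediate), and one must ensure the integral operator $A$ with the differentiated kernel is genuinely Hilbert--Schmidt, i.e. that $\partial_u^{\lceil (d+1)/2\rceil}$-type derivatives of $K$ in the distributional sense are honest $L_2$ functions — which follows from $K\in W^{2d+2,1}$ and $2d+2 > \lceil(d+1)/2\rceil$. A secondary subtlety is justifying the factorization $T=AB$ at the level of operators rather than formally: this is routine once both $A$ and $B$ are shown to be bounded (indeed Hilbert--Schmidt) and one checks the kernels multiply correctly, using that $(1-\Delta)^{-(d+1)/2}$ has an $L_2$ convolution-type kernel on the torus. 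None of these steps is deep; the content is entirely the derivative count, which the hypothesis has been rigged to supply.
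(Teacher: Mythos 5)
Your overall strategy --- factor $T=AB$ with $B=(1-\Delta)^{-(d+1)/2}\in\mathcal{L}_2$ and $A$ the integral operator with kernel $K_1(t,u)=\big((1-\Delta_u)^{(d+1)/2}K(t,\cdot)\big)(u)$ --- is a legitimate alternative to the paper's argument (which extends $K$ to $\mathbb{T}^d\times\mathbb{T}^d$, expands in the double Fourier basis, bounds $\|S\|_1$ by $\sum_{m_1,m_2}|c_{m_1,m_2}|$, and controls the coefficients via $\|(1-\Delta_{\mathbb{T}^{2d}})^{d+1}K\|_1$ together with the convergence of $\sum(1+|m_1|^2+|m_2|^2)^{-d-1}$). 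However, the key estimate in your ``clean route'' is wrong. Its first step asserts
$$\Big(\int_{[0,1]^d}|K_1(t,u)|^2\,du\Big)^{1/2}=\|K(t,\cdot)\|_{H^{d+1}_u}\leq c\sum_{|\beta|\leq d+1}\|\partial_u^\beta K(t,\cdot)\|_{L_1},$$
i.e.\ the embedding $W^{d+1,1}\hookrightarrow H^{d+1}$, which at top order is $L_1\hookrightarrow L_2$ and is false on a bounded domain. The embedding you actually invoke, $W^{d+1,1}\hookrightarrow L_2$, produces only $\|K(t,\cdot)\|_{L_2}$ on the left; applied instead to $K_1(t,\cdot)$ it consumes $2d+2$ $u$-derivatives of $K$, after which the second ($t$-variable) step pushes the total to $3d+3$, exceeding the hypothesis. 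So the derivative count does not ``match'' as claimed. The bound $\|K_1\|_{L_2}\leq c_d\|K\|_{W^{2d+2,1}}$ that you need is nevertheless true, but it must be obtained from the joint $2d$-dimensional embedding $W^{2d+2,1}([0,1]^{2d})\hookrightarrow H^{d+2}([0,1]^{2d})\subset L_2(dt;H^{d+1}(du))$ (passing from $L_1$ to $L_2$ in dimension $2d$ costs $d$ derivatives), not by iterating the $d$-dimensional embedding one variable at a time.

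A second gap is the boundary issue, which you flag but dismiss as routine bookkeeping. With the periodic realization of $\Delta$, identifying the kernel of $A=T(1-\Delta)^{(d+1)/2}$ requires moving $(1-\Delta_u)^{(d+1)/2}$ onto $K(t,\cdot)$ by duality, and a generic $K\in W^{2d+2,1}([0,1]^{2d})$ is not in the periodic Sobolev space: its periodic extension has jumps across the boundary, its distributional derivatives contain surface measures, and $K_1$ fails to be an $L_2$ function. This is exactly why the paper's proof begins by extending $K$ to $W^{2d+2,1}([-\pi,\pi]^{2d})$ with controlled norm and vanishing near the boundary, and only then passes to the torus (writing $T=M_{\chi_{(0,1)^d}}SM_{\chi_{(0,1)^d}}$, so $\|T\|_1\leq\|S\|_1$). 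Your proof needs the same extension-and-cutoff step; with it, and with the corrected Sobolev embedding above, your factorization argument does go through. (A minor slip: ``$(1-\Delta)^{-s}\in\mathcal{L}_2$ precisely when $2s>d$'' is the trace-class threshold; Hilbert--Schmidt requires only $4s>d$. This is harmless here since $s=(d+1)/2$ satisfies both, and indeed your $B$ is already trace class.)
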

\begin{proof} Let $K\in W^{2d+2,1}([-\pi,\pi]^d\times[-\pi,\pi]^d)$ be an extension of $K$ such that
$$\|K\|_{W^{2d+2,1}([-\pi,\pi]^d\times[-\pi,\pi]^d)}\leq c_d\|K\|_{W^{2d+2,1}([0,1]^d\times[0,1]^d)}$$
and such that $K$ vanishes on and near the boundary. Thus, $K\in W^{2d+2,1}(\mathbb{T}^d\times\mathbb{T}^d).$ Let $S:L_2(\mathbb{T}^d)\to L_2(\mathbb{T}^d)$ be an integral operator with integral kernel $K.$ We have $T=M_{\chi_{(0,1)^d}}SM_{\chi_{(0,1)^d}}.$ Thus, $\|T\|_1\leq\|S\|_1.$

Let us write Fourier series
$$K(t,s)=\sum_{m_1,m_2\in\mathbb{Z}^d}c_{m_1,m_2}e_{m_1}(t)e_{m_2}(s),\quad t,s\in\mathbb{T}^d.$$
Set
$$S_{m_1,m_2}\xi=\langle\xi,e_{-m_2}\rangle e_{m_1},\quad\xi\in L_2(\mathbb{T}^d).$$
It is an integral operator on $L_2(\mathbb{T}^d)$ with the integral kernel $(t,s)\to e_{m_1}(t)e_{m_2}(s).$ Hence,
$$S=\sum_{m_1,m_2\in\mathbb{Z}^d}c_{m_1,m_2}S_{m_1,m_2}.$$
By triangle inequality, we have
$$\|S\|_1\leq\sum_{m_1,m_2\in\mathbb{Z}^d}|c_{m_1,m_2}|\leq$$
$$\leq\sup_{m_1,m_2\in\mathbb{Z}^d}(1+|m_1|^2+|m_2|^2)^{d+1}|c_{m_1,m_2}|\cdot\sum_{m_1,m_2\in\mathbb{Z}^d}(1+|m_1|^2+|m_2|^2)^{-d-1}.$$

Observe that $(1+|m_1|^2+|m_2|^2)^{d+1}c_{m_1,m_2}$ is the $(m_1,m_2)-$th Fourier coefficient of the function $(1-\Delta_{\mathbb{T}^{2d}})^{d+1}(K)$ (here, $\Delta_{\mathbb{T}^{2d}}$ is the Laplacian on the torus $\mathbb{T}^{2d}$). Taking into account that Fourier coefficients do not exceed the $L_1-$norm, we infer that
$$(1+|m_1|^2+|m_2|^2)^{d+1}|c_{m_1,m_2}|\leq(2\pi)^{-2d}\|(1-\Delta_{\mathbb{T}^{2d}})^{d+1}K\|_1\leq c_d\|K\|_{W^{2d+2,1}}.$$
Here, the last inequality follows from the definition of a Sobolev space.
\end{proof}

In what follows, we consider the tensor product of 2 bounded operators on a Hilbert space $H$ as a bounded operator on the Hilbert space $H\bar{\otimes}H.$

\begin{lem}\label{fubini lemma} If $T\in\mathcal{L}_{1,\infty}$ and $S\in\mathcal{L}_1,$ then $S\otimes T\in\mathcal{L}_{1,\infty}$ and
\begin{equation}\label{fubini formula}
\varphi(S\otimes T)={\rm Tr}(S)\cdot\varphi(T)
\end{equation}
for every continuous trace $\varphi$ on $\mathcal{L}_{1,\infty}.$
\end{lem}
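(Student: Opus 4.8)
The plan is to reduce everything to the case of finite-rank $S$ by a density/continuity argument, and then handle finite rank by the (already available) fact that traces depend only on the singular value sequence together with an explicit description of the singular values of $S\otimes T$ when $S$ is a rank-one projection.

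First I would treat the model case $S=p$, a rank-one projection. Then $S\otimes T$ is unitarily equivalent (via the obvious identification of $pH\bar\otimes H$ with $H$) to an operator with exactly the same singular value sequence as $T$; in particular $S\otimes T\in\mathcal L_{1,\infty}$ with $\|S\otimes T\|_{1,\infty}=\|T\|_{1,\infty}$, and by the quoted Lemma 5.2.2 of \cite{LSZ} (traces agree on positive operators with equal singular values, extended by linearity using $S\otimes T = (S\otimes T_1) - (S\otimes T_2) + i(S\otimes T_3) - i(S\otimes T_4)$ with $T_j\geq0$) one gets $\varphi(S\otimes T)=\varphi(T)$. Here ${\rm Tr}(S)=1$, so \eqref{fubini formula} holds. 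Next, for $S$ a finite-rank self-adjoint operator I would diagonalise $S=\sum_{j=1}^n\lambda_j p_j$ with $p_j$ mutually orthogonal rank-one projections, use linearity of $\otimes$ in the first variable and of $\varphi$ to write $\varphi(S\otimes T)=\sum_j\lambda_j\varphi(p_j\otimes T)=\sum_j\lambda_j\varphi(T)={\rm Tr}(S)\varphi(T)$; a general finite-rank $S$ splits into real and imaginary parts. At this stage $S\otimes T\in\mathcal L_{1,\infty}$ is clear and $\|S\otimes T\|_{1,\infty}\leq c\,\|S\|_1\|T\|_{1,\infty}$ (say, by the triangle inequality in the quasi-Banach space $\mathcal L_{1,\infty}$ applied to the rank-one decomposition, or directly from a singular value estimate).

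The remaining step is to pass from finite-rank $S$ to arbitrary $S\in\mathcal L_1$ by approximation. Given $S\in\mathcal L_1$, pick finite-rank $S_k\to S$ in $\|\cdot\|_1$. The key quasi-norm bound I need is $\|S\otimes T\|_{1,\infty}\leq c_d\|S\|_1\|T\|_{1,\infty}$ for all $S\in\mathcal L_1$; this follows because $S_k\otimes T$ is Cauchy in $\mathcal L_{1,\infty}$ (by the bound for finite rank applied to $S_k-S_l$) and $S_k\otimes T\to S\otimes T$ in operator norm, hence in $\mathcal L_{1,\infty}$, giving $S\otimes T\in\mathcal L_{1,\infty}$ with the stated bound. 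Now $\varphi$ is continuous on $\mathcal L_{1,\infty}$ in the quasi-norm (this is part of the hypothesis — $\varphi$ is a continuous trace), so $\varphi(S\otimes T)=\lim_k\varphi(S_k\otimes T)=\lim_k{\rm Tr}(S_k)\varphi(T)={\rm Tr}(S)\varphi(T)$, using continuity of ${\rm Tr}$ on $\mathcal L_1$. This proves \eqref{fubini formula}.

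The main obstacle I anticipate is establishing the quasi-norm estimate $\|S\otimes T\|_{1,\infty}\leq c_d\|S\|_1\|T\|_{1,\infty}$ cleanly, since $\mathcal L_{1,\infty}$ is only quasi-Banach and one must be careful that the quasi-triangle-inequality constant does not blow up under the finite sums appearing in the spectral decomposition of $S$. The way around this is to avoid summing many terms with a bare quasi-triangle inequality: instead use the singular value characterisation directly. Writing $S=\sum_j\lambda_j p_j$, one checks $\mu(S\otimes T)$ is the decreasing rearrangement of the double-indexed family $\{|\lambda_j|\mu(m,T)\}_{j,m}$, and a standard computation (Hardy–Littlewood type, or the submultiplicativity-like estimate $\mu(n, S\otimes T)\leq \tfrac{C}{n+1}\sum_j|\lambda_j|\cdot\sup_m (m+1)\mu(m,T)$) yields $\sup_n (n+1)\mu(n,S\otimes T)\leq c\,\|S\|_1\|T\|_{1,\infty}$ with an absolute constant. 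One could also simply cite the known fact that $\mathcal L_1\cdot \mathcal L_{1,\infty}$-type tensor estimates hold; in any case, once this bound is in hand, the approximation argument above goes through verbatim and the lemma follows.
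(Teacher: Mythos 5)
Your proof follows essentially the same route as the paper: verify \eqref{fubini formula} for rank-one projections using that continuous traces depend only on singular value sequences (Lemma 5.2.2 of \cite{LSZ}), extend to finite rank by linearity, and then pass to general $S\in\mathcal L_1$ via the quasi-norm bound $\|S\otimes T\|_{1,\infty}\lesssim\|S\|_1\|T\|_{1,\infty}$ together with the quasi-norm continuity of $\varphi$. The one point where you diverge is the quasi-norm bound: the paper invokes the exact identity $\mu(S\otimes z)=\|S\|_1 z$, $z(t)=t^{-1}$, from p.~211 of \cite{LT2}, which immediately gives $\|S\otimes T\|_{1,\infty}\leq\|S\|_1\|T\|_{1,\infty}$, whereas you (correctly) flag that a naive quasi-triangle-inequality sum over the spectral decomposition of $S$ could blow up the constant, and propose instead a distribution-function counting argument on the doubly indexed family $\{\mu(j,S)\mu(m,T)\}_{j,m}$; that route is sound and in fact also yields constant $1$, since for any $\lambda>0$ the number of pairs with $\mu(j,S)\mu(m,T)>\lambda$ is at most $\sum_j\mu(j,S)\|T\|_{1,\infty}/\lambda=\|S\|_1\|T\|_{1,\infty}/\lambda$.
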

\begin{proof} Firstly, we show that $S\otimes T\in\mathcal{L}_{1,\infty}.$ Let $z(t)=t^{-1},$ $t>0.$ By definition, we have $\mu(T)\leq\|T\|_{1,\infty}z.$ The crucial fact that $\mu(S\otimes z)=\|S\|_1z$ is proved on p. 211 in \cite{LT2}. Thus,
$$\|S\otimes T\|_{1,\infty}=\|S\otimes\mu(T)\|_{1,\infty}\leq\|T\|_{1,\infty}\|S\otimes z\|_{1,\infty}=\|T\|_{1,\infty}\|S\|_1.$$

We now turn to the proof of \eqref{fubini formula}. If $S$ is a rank one projection, then there is nothing to prove. If $S$ is a positive finite rank operator, then the assertion follows by linearity. If $S$ is an arbitrary finite rank operator, then the assertion again follows by linearity. 

Let $S\in\mathcal{L}_1$ be arbitrary. Fix $\epsilon>0$ and choose $S_1,S_2\in\mathcal{L}_1$ such that $S=S_1+S_2,$ $S_1$ is finite rank and $\|S_2\|_1\leq\epsilon.$ Clearly,
$$\varphi(S\otimes T)-{\rm Tr}(S)\cdot\varphi(T)=$$
$$=(\varphi(S_1\otimes T)-{\rm Tr}(S_1)\cdot\varphi(T))+(\varphi(S_2\otimes T)-{\rm Tr}(S_2)\cdot\varphi(T)).$$
By the preceding paragraph, the summand in the first bracket vanishes. Thus,
$$\varphi(S\otimes T)-{\rm Tr}(S)\cdot\varphi(T)=\varphi(S_2\otimes T)-{\rm Tr}(S_2)\cdot\varphi(T).$$
Hence,
$$|\varphi(S\otimes T)-{\rm Tr}(S)\cdot\varphi(T)|\leq|\varphi(S_2\otimes T)|+|{\rm Tr}(S_2)\cdot\varphi(T)|\leq$$
$$\leq\|\varphi\|_{\mathcal{L}_{1,\infty}^*}\cdot(\|S_2\otimes T\|_{1,\infty}+|{\rm Tr}(S_2)|\|T\|_{1,\infty}).$$
By the norm estimate in the first paragraph and by the assumption on $S_2,$ we have
$$|\varphi(S\otimes T)-{\rm Tr}(S)\cdot\varphi(T)|\leq 2\epsilon\|\varphi\|_{\mathcal{L}_{1,\infty}^*}\|T\|_{1,\infty}.$$
Since $\epsilon>0$ is arbitrarily small, the assertion follows.
\end{proof}

In the following lemma, we consider the direct sum of bounded operators on a Hilbert space $H$ as a bounded operator on a Hilbert space $\bigoplus_{m\geq0}H.$

\begin{lem}\label{direct sum lemma} If the operators $\{T_m\}_{m\geq0}$ are pairwise orthogonal, i.e. $T_{m_1}T_{m_2}=T_{m_1}^*T_{m_2}=0$ for $m_1\neq m_2,$ then $\sum_{m\geq0}T_m$ is unitarily equivalent\footnote{To be pedantic, $\sum_{m\geq0}T_m$ is unitarily equivalent to the direct sum $\bigoplus_{m\geq0}T_m|_{r_m(H)\to r_m(H)},$ where $r_m$ is the projection defined in the proof of Lemma \ref{direct sum lemma}. Clearly, $T_m$ is unitarily equivalent to the direct sum $T_m|_{r_m(H)\to r_m(H)}\bigoplus 0_{(1-r_m)(H)\to (1-r_m)(H)}.$ Thus, a direct sum $\bigoplus_{m\geq0}T_m$ is unitarily equivalent to $(\sum_{m\geq0}T_m)\bigoplus 0.$ In what follows, we ignore this subtle difference and write unitary equivalence as stated in Lemma \ref{direct sum lemma}.} to $\bigoplus_{m\geq0}T_m.$ Here, the sums are taken in the weak operator topology.
\end{lem}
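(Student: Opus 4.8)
The plan is to make the pairwise orthogonality condition concrete by producing, for each $m$, the natural projections onto the ``row'' and ``column'' spaces of $T_m$, and then to read off the block structure of $\sum_{m\geq 0}T_m$. Concretely, let $l_m$ denote the projection onto $\overline{\operatorname{ran}(T_m)}$ and $r_m$ the projection onto $\overline{\operatorname{ran}(T_m^*)}=(\ker T_m)^\perp$. The hypotheses $T_{m_1}^*T_{m_2}=0$ and $T_{m_1}T_{m_2}^*=0$ for $m_1\neq m_2$ (the latter following from $T_{m_1}^*T_{m_2}=0$ applied to adjoints, which gives $T_{m_2}^*T_{m_1}=0$, together with $T_{m_1}T_{m_2}=0$; one should double-check that the stated hypothesis $T_{m_1}T_{m_2}=T_{m_1}^*T_{m_2}=0$ indeed also yields $T_{m_1}T_{m_2}^*=0$ by passing to adjoints in $T_{m_1}T_{m_2}=0$, i.e. $T_{m_2}^*T_{m_1}^*=0$) translate into $r_{m_1}r_{m_2}=0$ and $l_{m_1}l_{m_2}=0$ for $m_1\neq m_2$. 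Thus $\{r_m\}_{m\geq 0}$ is a family of pairwise orthogonal projections, and likewise $\{l_m\}_{m\geq 0}$.

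Next I would observe that $T_m = l_m T_m r_m$ for each $m$, and hence for the weak-operator sum $T:=\sum_{m\geq 0}T_m$ one has $l_{m} T r_{m'} = \delta_{m,m'} T_m$, so that relative to the (not necessarily complete) orthogonal decompositions given by $\{r_m\}$ on the domain side and $\{l_m\}$ on the range side, $T$ is ``block diagonal'' with blocks $T_m$. To upgrade this to a genuine unitary equivalence with $\bigoplus_{m\geq 0}T_m$ acting on $\bigoplus_{m\geq 0}H$, I would enlarge each block space to a full copy of $H$: since $H$ is separable and infinite-dimensional, both $r_m(H)$ and $(1-\sum_{m}r_m)(H)$ embed isometrically into $H$, and similarly on the range side, so one can choose partial isometries assembling $\bigoplus_m H$ onto $H$ in a way that carries $\bigoplus_m T_m$ to $\sum_m T_m$ (modulo a zero summand, exactly as flagged in the footnote). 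The cleanest way to phrase this is: pick a unitary $W:\bigoplus_{m\geq 0}H\to H$ sending the $m$-th summand onto $r_m(H)$ for each $m$ (and the remaining summands onto an orthogonal complement), and a unitary $V:\bigoplus_{m\geq 0}H\to H$ doing the same with $l_m(H)$; then $V^*\,T\,W=\bigoplus_{m\geq 0}\big(T_m|_{r_m(H)\to l_m(H)}\big)\oplus 0$, which after the footnote's identification is $\bigoplus_{m\geq 0}T_m$. One should also note $\|T\|=\sup_m\|T_m\|<\infty$ so that $T$ is a well-defined bounded operator and the weak-operator sum makes sense.

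The main technical point — and the place I would be most careful — is the convergence and boundedness of the weak-operator sum $\sum_{m\geq 0}T_m$, and the verification that it genuinely restricts to $T_m$ between the pieces $r_m(H)$ and $l_m(H)$ rather than merely ``agreeing modulo tails''. Because the projections $\{r_m\}$ (resp. $\{l_m\}$) are pairwise orthogonal, for any $\xi\in H$ the vectors $T_m\xi = T_m r_m\xi$ are pairwise orthogonal with $\sum_m\|T_m r_m\xi\|^2 \leq (\sup_m\|T_m\|^2)\sum_m\|r_m\xi\|^2 \leq (\sup_m\|T_m\|^2)\|\xi\|^2$, so the partial sums $\sum_{m\leq N}T_m\xi$ form a Cauchy sequence; this actually gives strong-operator convergence, which is stronger than what is claimed and is what legitimises all the manipulations above. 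Everything else is bookkeeping with orthogonal projections and the separability of $H$.
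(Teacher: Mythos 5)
Your decomposition into range projections $l_m$ and cokernel projections $r_m$ is the right starting point and matches the paper's $q_m=\mathrm{supp}(T_m^*)$, $p_m=\mathrm{supp}(T_m)$. However, the final step does not establish \emph{unitary} equivalence. You introduce two \emph{different} unitaries $V,W\colon\bigoplus_m H\to H$ and obtain $V^*TW=\bigoplus_m T_m\oplus 0$. Writing $T=V(\bigoplus_m T_m\oplus 0)W^*$ only shows that $T$ and $\bigoplus_m T_m$ have the same singular value sequence; it does not exhibit a single unitary $U$ with $U^*TU=\bigoplus_m T_m$. For positive operators this distinction would be harmless (Lemma 5.2.2 of \cite{LSZ}), but the lemma is applied in Proposition \ref{measurability prop} to non-positive blocks $T_{m,l_1}$, where $\varphi(T)=\varphi(V(\bigoplus T_m)W^*)=\varphi((W^*V)\bigoplus T_m)$ need not equal $\varphi(\bigoplus T_m)$, since traces are only invariant under \emph{conjugation} by unitaries. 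The paper's proof avoids this by working with the \emph{single} family of joins $r_m:=p_m\vee q_m$ (your $l_m\vee r_m$): one shows $r_{m_1}r_{m_2}=0$ for $m_1\neq m_2$ via the operator-monotonicity inequality $p_1\vee p_2\leq\mathrm{supp}(p_1+p_2)$, and then $r_mT=Tr_m=T_m$ gives a genuine block-diagonal decomposition of $T$ with respect to one orthogonal family, hence honest unitary equivalence. This modification — replace the two families by their joins — is the missing idea; it costs nothing more than what you already have.

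Your parenthetical suspicion about $T_{m_1}T_{m_2}^*=0$ is well founded and deserves a firmer statement: it does \emph{not} follow from the stated hypothesis $T_{m_1}T_{m_2}=T_{m_1}^*T_{m_2}=0$. Taking adjoints only yields $T_{m_1}^*T_{m_2}^*=0$ and $T_{m_2}^*T_{m_1}=0$, never $T_{m_1}T_{m_2}^*=0$. A concrete obstruction: take $\xi_1,\xi_2,\eta$ orthonormal and set $T_1=\xi_1\langle\cdot,\eta\rangle$, $T_2=\xi_2\langle\cdot,\eta\rangle$. All of $T_1T_2$, $T_1^*T_2$, $T_2T_1$, $T_2^*T_1$ vanish, yet $T_1T_2^*\neq 0$, and $T_1+T_2=(\xi_1+\xi_2)\langle\cdot,\eta\rangle$ has rank one while $T_1\oplus T_2$ has rank two, so the conclusion fails. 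Thus the hypothesis as printed must be read as also including $T_{m_1}T_{m_2}^*=0$ (equivalently, orthogonality of the ranges of $T_{m_1}^*$ and $T_{m_2}^*$); this is exactly what makes all four products among $p_{m_1},q_{m_1},p_{m_2},q_{m_2}$ vanish, and it is satisfied in the application, where the operators $T_{m,l_1}$ are supported on pairwise disjoint cubes on both the domain and the range side. Both your argument and the paper's implicitly use this extra orthogonality; your proof should state it explicitly rather than attempt (and fail) to derive it.

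Two minor points: you should also note that $r_{m_1}l_{m_2}=0$ and $l_{m_1}r_{m_2}=0$ for $m_1\neq m_2$ (these follow from $T_{m_1}T_{m_2}=0$ and its swap), since these mixed orthogonalities are precisely what make the joins $l_m\vee r_m$ pairwise orthogonal. Your remark on strong-operator convergence of $\sum_m T_m$ via the Bessel-type estimate is correct and is a useful observation that the paper leaves implicit.
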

\begin{proof} Let $p_1$ and $p_2$ be projections on $H.$ Since $t\to t^{\frac1n},$ $t>0,$ is an operator monotone function for every $n\geq1,$ it follows that
$$p_1=p_1^{\frac1n}\leq(p_1+p_2)^{\frac1n}\stackrel{{\rm sot}}{\to}{\rm supp}(p_1+p_2).$$
Similarly, $p_2\leq {\rm supp}(p_1+p_2)$ and, therefore,
$$p_1\vee p_2\leq {\rm supp}(p_1+p_2).$$
This simple fact can be also found in Proposition 2.5.14 in \cite{KR1}.

Let $p_m={\rm supp}(T_m)$ and $q_m={\rm supp}(T_m^*).$ It follows from the assumption that $p_{m_1}p_{m_2}=p_{m_1}q_{m_2}=q_{m_1}q_{m_2}=0,$ $m_1\neq m_2.$ Set $r_m=p_m\vee q_m.$ We have
$$(p_{m_1}+q_{m_1})(p_{m_2}+q_{m_2})=0,\quad m_1\neq m_2.$$
Thus,
$${\rm supp}(p_{m_1}+q_{m_1})\cdot{\rm supp}(p_{m_2}+q_{m_2})=0,\quad m_1\neq m_2.$$
By the preceding paragraph, we have $r_{m_1}r_{m_2}=0,$ $m_1\neq m_2.$

If $T=\sum_{m\geq0}T_m,$ then $r_mT=T_m$ and $Tr_m=T_m$ for every $m\geq0.$ Thus, $T=\bigoplus_{m\geq0}T_m,$ where $T_m$ acts on the Hilbert space $r_m(H).$
\end{proof}

Let
$$h(t)=(1+\sum_{k=1}^d\lfloor t_k\rfloor^2)^{-\frac{d}{2}},\quad t\in\mathbb{R}^d.$$

The following proposition yields a special case of Theorem \ref{cif ncplane}.

\begin{prop}\label{measurability prop} If $f$ is a Schwartz function supported on $[-1,1]^d$ and if $x={\rm Op}(f),$ then $xh(\nabla)$ is measurable.
\end{prop}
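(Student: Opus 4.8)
The plan is to reduce the measurability of $xh(\nabla)$ to an application of Lemma \ref{fubini lemma}, by exhibiting $xh(\nabla)$ (up to unitary equivalence and a trace-class error) as a direct sum $\bigoplus_{m}S_m$ where each $S_m$ is trace class and the "tail" behaves like $S\otimes T$ with $S\in\mathcal{L}_1$ and $T$ a fixed measurable operator. The key geometric input is that $h$ is constant on each unit cube $Q_m=m+[0,1)^d$, $m\in\mathbb{Z}^d$, with value $(1+|m|^2)^{-d/2}$ (writing $|m|^2=\sum_k m_k^2$). Since $h(\nabla)$ is a multiplication operator on $L_2(\mathbb{R}^d)$, it decomposes as $h(\nabla)=\sum_{m\in\mathbb{Z}^d}(1+|m|^2)^{-d/2}P_m$, where $P_m=M_{\chi_{Q_m}}$ is the projection onto functions supported on $Q_m$. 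Correspondingly $xh(\nabla)=\sum_m (1+|m|^2)^{-d/2}\,xP_m$, and I would first analyse the individual blocks $P_{m'}xP_m$.

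The main technical step is to show that each block $P_{m'}xP_m$ is trace class with a summable-in-a-controlled-way norm, and, more importantly, that $xP_m$ is "almost" supported near $Q_m$ in the sense that the off-diagonal blocks decay rapidly. Here I would use the assumption that $f$ is Schwartz and supported in $[-1,1]^d$: the operator $x={\rm Op}(f)$ acts on $L_2(\mathbb{R}^d)$ via the realisation \eqref{nc_plane_realisation}, so $(x\xi)(u)=\frac{1}{(2\pi)^{d/4}}\int_{[-1,1]^d}f(s)e^{-\frac{i}{2}\langle s,\theta u\rangle}\xi(u-s)\,ds$, which is an integral operator with kernel $k(u,v)=\frac{1}{(2\pi)^{d/4}}f(u-v)e^{-\frac{i}{2}\langle u-v,\theta u\rangle}$ supported on $|u-v|_\infty\le 1$. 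Consequently $P_{m'}xP_m=0$ unless the cubes $Q_{m'}$ and $Q_m$ are within distance $1$, i.e. $\|m-m'\|_\infty\le 1$; this is the decisive simplification — the "matrix" $(P_{m'}xP_m)_{m,m'}$ is band-limited. I would then estimate $\|P_{m'}xP_m\|_1$ using Lemma \ref{l1 estimate} applied to the restriction of the (smooth, compactly supported) kernel $k$ to $Q_{m'}\times Q_m$, after an affine change of variables onto the unit cube; the Schwartz decay of $f$ and its derivatives gives $\|P_{m'}xP_m\|_1\le c_d\|f\|_{W^{2d+2,1}}$ uniformly in $m,m'$, and in fact the kernel is translation-covariant up to a unimodular phase so these $\mathcal{L}_1$-norms are uniformly bounded.

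With the band structure in hand, I would split $xh(\nabla)=\sum_m (1+|m|^2)^{-d/2}xP_m$ according to a colouring of $\mathbb{Z}^d$ into finitely many (namely $3^d$) classes $\mathbb{Z}^d=\bigsqcup_{c} \Lambda_c$ such that any two distinct elements of the same class are more than distance $1$ apart in $\|\cdot\|_\infty$. For a fixed class $\Lambda_c$, the operators $\{xP_m\}_{m\in\Lambda_c}$ have pairwise orthogonal left and right supports (left supports inside the union of cubes within distance $1$ of $Q_m$, which are disjoint across $\Lambda_c$ if we thicken appropriately — I would need to be a little careful and possibly use a coarser colouring with more than $3^d$ classes to make both the ranges and co-ranges orthogonal, which is still a finite number independent of everything), so Lemma \ref{direct sum lemma} applies and $\sum_{m\in\Lambda_c}(1+|m|^2)^{-d/2}xP_m$ is unitarily equivalent to $\bigoplus_{m\in\Lambda_c}(1+|m|^2)^{-d/2}(xP_m|_{\cdots})$. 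Now each summand is a trace-class operator of $\mathcal{L}_1$-norm $\le c_d(1+|m|^2)^{-d/2}\|f\|_{W^{2d+2,1}}$, and $\sum_{m\in\mathbb{Z}^d}(1+|m|^2)^{-d/2}=\infty$ (borderline divergent, the singular-value sequence of the direct sum is $O(1/k)$). The point is that a direct sum $\bigoplus_m \lambda_m A_m$ with $A_m$ trace class, $\operatorname{Tr}|A_m|$ controlled and $\lambda_m$ a fixed positive null sequence, has the same singular value asymptotics as $\bigoplus_m\lambda_m\cdot(\text{a scalar})$ up to trace-class perturbations once one organises the $A_m$ further by their rank/eigenvalue structure; cleanly, I would compare $\bigoplus_{m\in\Lambda_c}(1+|m|^2)^{-d/2}(xP_m)$ with a reference operator of the form $S\otimes T$ where $T={\rm diag}(\{(1+|m|^2)^{-d/2}\}_m)$ reshaped into a single $\mathcal{L}_{1,\infty}$ operator and $S$ absorbs the (uniformly trace-class) internal structure of the blocks, invoking Lemma \ref{fubini lemma} to conclude $\varphi$ is determined. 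The difference between $xh(\nabla)$ and this reference, being a sum of $\mathcal{L}_1$ corrections with summable $\mathcal{L}_1$-norms (the off-diagonal band terms and the discrepancy between $\lfloor t_k\rfloor^2$ bookkeeping and the true cube structure — note $h$ as defined uses $\lfloor t_k\rfloor^2$, which is exactly the value $|m|^2$ on $Q_m$, so there is in fact no discrepancy), lies in $\mathcal{L}_1$ and is thus annihilated by every continuous trace.

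\textbf{Main obstacle.} The delicate point is the passage from "$\bigoplus_m$ of uniformly-trace-class blocks scaled by $(1+|m|^2)^{-d/2}$" to an operator to which Lemma \ref{fubini lemma} literally applies. The blocks $xP_m$ are not scalars and not even of fixed finite rank, so one cannot immediately write $xh(\nabla)$ as $S\otimes T$. I expect the real work is in showing that replacing each block $xP_m$ by $\|xP_m\|_1$ times a rank-one operator changes $xh(\nabla)$ only by an $\mathcal{L}_1$ operator — equivalently, that $\sum_m(1+|m|^2)^{-d/2}\big(xP_m-(\text{rank-one approximant})\big)$ converges in $\mathcal{L}_1$ — which requires the trace norm of the "non-rank-one part" of $xP_m$ to decay in $m$ faster than $(1+|m|^2)^{-d/2}$ is borderline, i.e. one genuinely needs the Schwartz (rapid) decay of $f$ here, not merely $f\in W^{2d+2,1}$. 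Making this quantitative via Lemma \ref{l1 estimate} applied to high-order Sobolev norms of the localised kernels (which pick up polynomial-in-$m$ gains from the Schwartz tails of $f$) is the crux; once that is done, the rest is bookkeeping with Lemmas \ref{fubini lemma} and \ref{direct sum lemma}.
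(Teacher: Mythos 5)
Your decomposition is exactly the right one (the band structure coming from ${\rm supp}\,f\subset[-1,1]^d$, the mod-$3$ colouring of $\mathbb{Z}^d$, Lemma \ref{direct sum lemma}, and Lemma \ref{l1 estimate} for the individual blocks), but the step you flag as the ``main obstacle'' is a genuine gap, and the fix you sketch would not work. The point you are missing is that the translation covariance you mention only in passing is an \emph{exact} unitary equivalence, not merely a source of uniform trace-norm bounds: writing $P_m$ for multiplication by $\chi_{m+[0,1]^d}$ and fixing an offset $l_1\in\{-1,0,1\}^d$, the block $P_{m+l_1}xP_m$ equals $U_mS_{l_1}U_m^{-1}$, where $(U_m\xi)(t)=e^{\frac{i}{2}\langle m,\theta t\rangle}\xi(t-m)$ and $S_{l_1}$ is a \emph{single} trace-class operator independent of $m$ (its kernel is $f(t-s)e^{\frac{i}{2}\langle s,\theta t\rangle}\chi_{l_1+[0,1]^d}(t)\chi_{[0,1]^d}(s)$). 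Consequently, for each colour class the orthogonal direct sum $\bigoplus_m(1+|m|^2)^{-\frac{d}{2}}P_{m+l_1}xP_m$ is unitarily equivalent, on the nose, to $S_{l_1}\otimes{\rm diag}\big(\{(1+|m|^2)^{-\frac{d}{2}}\}_m\big)$, and Lemma \ref{fubini lemma} applies literally, with no rank-one approximation and no $\mathcal{L}_1$ error term. (One still has to note that the diagonal factor itself is measurable, which is a standard partial-sum computation for its decreasing rearrangement.)

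By contrast, the route you propose for closing the gap --- replacing each block by a rank-one approximant and controlling the error in $\mathcal{L}_1$ using ``Schwartz decay of $f$ picking up polynomial-in-$m$ gains'' --- cannot work: $f$ is compactly supported, and the blocks $P_{m+l_1}xP_m$ are all unitarily equivalent to the same $S_{l_1}$, so the trace norm of the ``non-rank-one part'' is a nonzero constant independent of $m$, and $\sum_m(1+|m|^2)^{-\frac{d}{2}}\cdot{\rm const}$ diverges. There is no decay in $m$ to exploit; the exact $m$-independence of the blocks is not an obstruction to be perturbed away but precisely the structure that turns the direct sum into a tensor product.
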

\begin{proof} {\bf Step 1:} We have that $xh(\nabla)$ is an integral operator with the kernel
$$K:(t,s)\to f(t-s)h(s)e^{\frac{i}{2}\langle s,\theta t\rangle},\quad t,s\in\mathbb{R}^2.$$
By assumption on $f,$ we have that
$$f(s-t)=0,\quad s\in m_1+[0,1]^d,\quad t\in m_2+[0,1]^2,\quad m_1-m_2\notin\{-1,0,1\}^d.$$
Thus,
$$xh(\nabla)=\sum_{l_1,l_2\in\{-1,0,1\}^d}T_{l_1,l_2},\quad T_{l_1,l_2}=\sum_{\substack{m\in\mathbb{Z}^d \\ m=l_2{\rm mod}3}}h(m)T_{m,l_1},$$
where $T_{m,l_1}$ is an integral operator whose integral kernel is given by the formula
$$(t,s)\to f(t-s)e^{\frac{i}{2}\langle s,\theta t\rangle}\chi_{m+l_1+[0,1]^d}(t)\chi_{m+[0,1]^d}(s),\quad t,s\in\mathbb{R}^d,$$

{\bf Step 2:} We claim that $T_{l_1,l_2}\in\mathcal{L}_{1,\infty}$ and is measurable.

Note that the operators $\{T_{m,l_1}\}_{\substack{m\in\mathbb{Z}^d \\ m=l_2{\rm mod}3}}$ are pairwise orthogonal. Therefore, we have ($\sim$ denotes unitary equivalence)
$$T_{l_1,l_2}\sim\bigoplus_{\substack{m\in\mathbb{Z}^d \\ m=l_2{\rm mod}3}}(1+|m|^2)^{-\frac{d}{2}}T_{m,l_1}.$$

By definition, $T_{m,l_1}:L_2(m+[-1,2]^d)\to L_2(m+[-1,2]^d).$ Define a unitary operator 
$$U_m:L_2([-1,2]^d)\to L_2(m+[-1,2]^d)$$
by setting
$$(U_m\xi)(t)=e^{\frac{i}{2}\langle m,\theta t\rangle}\xi(t-m),\quad\xi\in L_2([-1,2]^d),\quad t\in m+[-1,2]^d.$$
Define an operator $S_{l_1}:L_2([-1,2]^d)\to L_2([-1,2]^d)$ to be an integral operator with the integral kernel
$$(t,s)\to f(t-s)e^{\frac{i}{2}\langle s,\theta t\rangle}\chi_{l_1+[0,1]^d}(t)\chi_{[0,1]^d}(s),\quad t,s\in [-1,2]^d.$$
A direct computational argument shows that\footnote{Indeed,
$$(U_m^{-1}\xi)(t)=e^{-\frac{i}{2}\langle m,\theta t\rangle}\xi(t+m),\quad\xi\in L_2(m+[-1,2]^d),\quad t\in [-1,2]^d.$$
Thus,
$$(S_{l_1}U_m^{-1}\xi)(t)=\chi_{l_1+[0,1]^d}(t)\cdot\int_{[0,1]^d}f(t-s)e^{\frac{i}{2}\langle s,\theta(t+m)\rangle}\xi(s+m)ds.$$
Thus,
$$(U_mS_{l_1}U_m^{-1}\xi)(t)=\chi_{l_1+[0,1]^d}(t-m)\cdot\int_{[0,1]^d}e^{\frac{i}{2}\langle m,\theta t\rangle}f(t-s-m)e^{\frac{i}{2}\langle s,\theta t\rangle}\xi(s+m)ds=$$
$$=\chi_{m+l_1+[0,1]^d}(t)\cdot\int_{m+[0,1]^d}f(t-s)e^{\frac{i}{2}\langle s,\theta t\rangle}\xi(s)ds.$$
}
$$T_{m,l_1}=U_mS_{l_1}U_m^{-1}.$$

Hence,
$$T_{l_1,l_2}\sim\bigoplus_{\substack{m\in\mathbb{Z}^d \\ m=l_2{\rm mod}3}}(1+|m|^2)^{-\frac{d}{2}}S_{l_1}\sim S_{l_1}\otimes\Big\{(1+|m|^2)^{-\frac{d}{2}}\Big\}_{\substack{m\in\mathbb{Z}^d \\ m=l_2{\rm mod}3}}.$$
By Lemma \ref{l1 estimate}, $S_{l_1}\in\mathcal{L}_1.$ The claim follows now from Lemma \ref{fubini lemma}.
\end{proof}

\begin{proof}[Proof of Theorem \ref{cif ncplane}] Choose a Schwartz function $f_0$ supported on $[-1,1]^d$ such that $f_0(0)\neq0$ and let $x_0={\rm Op}(f_0).$ Set 
$$k(t)=(1+|t|^2)^{\frac{d+1}{2}}\cdot\Big((1+|t|^2)^{-\frac{d}{2}}-(1+\sum_{k=1}^d\lfloor t_k\rfloor^2)^{-\frac{d}{2}}\Big),\quad t\in\mathbb{R}^d.$$
Clearly, $k$ is a bounded function on $\mathbb{R}^d.$

By Lemma \ref{density lemma} \eqref{densa}, we have $x_0\in W^{d,1}(\mathbb{R}^d_{\theta}).$ Using the obvious equality
$$x_0(1-\Delta)^{-\frac{d}{2}}-x_0h(\nabla)=x_0(1-\Delta)^{-\frac{d+1}{2}}\cdot k(\nabla)$$
and Theorem \ref{ncplane cwikel} \eqref{cwika}, we infer that
$$x_0(1-\Delta)^{-\frac{d}{2}}-x_0h(\nabla)\in\mathcal{L}_1.$$
By Proposition \ref{measurability prop}, we have that $x_0h(\nabla)$ is measurable and, hence, so is the operator $x_0(1-\Delta)^{-\frac{d}{2}}.$

Let now $x\in W^{d,1}(\mathbb{R}^d_{\theta})$ be arbitrary. Since $f_0$ is a Schwartz function, it follows that
$$\tau_{\theta}(x_0)=f_0(0)\neq0.$$
Without loss of generality, $\tau_{\theta}(x_0)=1.$ Let $z=x-\tau_{\theta}(x)x_0\in W^{d,1}(\mathbb{R}^d_{\theta}).$ Clearly, $\tau_{\theta}(z)=0.$ We have
$$\varphi(x(1-\Delta)^{-\frac{d}{2}})=\varphi(z(1-\Delta)^{-\frac{d}{2}})+\tau_{\theta}(x)\cdot\varphi(x_0(1-\Delta)^{-\frac{d}{2}}).$$
By Proposition \ref{proportionality prop}, the first summand vanishes. By the preceding paragraph, the second summand does not depend on $\varphi.$ This completes the proof.
\end{proof}

\end{document}